\def\@thmcountersep{.}
\spnewtheorem{defi}{Definition}[section]{\bfseries}{\normalfont}
\spnewtheorem{alg}{Algorithm}[section]{\bfseries}{\normalfont}
\spnewtheorem{ass}{Assumption}[section]{\bfseries}{\normalfont}
\spnewtheorem{lem}{Lemma}[section]{\bfseries}{\normalfont}
\spnewtheorem{thm}{Theorem}[section]{\bfseries}{\normalfont}
\spnewtheorem{cor}{Corollary}[section]{\bfseries}{\normalfont}
\spnewtheorem{rem}{Remark}[section]{\bfseries}{\normalfont}
\numberwithin{equation}{section}
\begin{document}

\title{ Gradient Norm Regularization Second-Order Algorithms for Solving Nonconvex-Strongly Concave Minimax Problems\thanks{This work is supported by National Natural Science Foundation of China under the grant 12471294.}
}

\titlerunning{}        

\author{ Junlin Wang
\and Zi Xu
}


\institute{
Jun-Lin Wang \at
Department of Mathematics,  College of Sciences, Shanghai University, Shanghai 200444, P.R.China.\\ \email{wjl37@shu.edu.cn}
\and           Zi Xu \at
Department of Mathematics, College of Sciences, Shanghai University, Shanghai 200444, P.R.China.\\
Visiting Scholar at Center for Intelligent Computing, Great Bay Institute for Advanced Study, Dongguan, 523000, People's Republic of China\\
Corresponding author. \email{xuzi@shu.edu.cn}
}

\date{Received: date / Accepted: date}

\maketitle

\begin{abstract}
In this paper, we study second-order algorithms for solving nonconvex-strongly concave minimax problems, which have attracted much attention in recent years in many fields, especially in machine learning.
We propose a gradient norm regularized trust-region (GRTR) algorithm to solve nonconvex-strongly concave minimax problems, where the objective function of the trust-region subproblem in each iteration uses a regularized version of the Hessian matrix, and the regularization coefficient and the radius of the ball constraint are proportional to the square root of the gradient norm. The iteration complexity of the proposed GRTR algorithm to obtain an $\mathcal{O}(\epsilon,\sqrt{\epsilon})$-second-order stationary point is proved to be upper bounded by $\tilde{\mathcal{O}}(\ell^{1.5}\rho^{0.5}\mu^{-1.5}\epsilon^{-1.5})$, where $\mu$ is the strong concave coefficient, $\ell$ and $\rho$ are the Lipschitz constant of the gradient and Jacobian matrix respectively, which matches the best known iteration complexity of second-order methods for solving nonconvex-strongly concave minimax problems. 
We further propose a Levenberg-Marquardt algorithm with a gradient norm regularization coefficient and use the negative curvature direction to correct the iteration direction (LMNegCur), which does not need to solve the trust-region subproblem at each iteration. We also prove that the LMNegCur algorithm achieves an $\mathcal{O}(\epsilon,\sqrt{\epsilon})$-second-order stationary point within $\tilde{\mathcal{O}}(\ell^{1.5}\rho^{0.5}\mu^{-1.5}\epsilon^{-1.5})$ number of iterations.
The inexact variants of both algorithms can still obtain $\mathcal{O}(\epsilon,\sqrt{\epsilon})$-second-order stationary points with high probability, but only require $\tilde{\mathcal{O}}(\ell^{2.25}\rho^{0.25}\mu^{-1.75}\epsilon^{-1.75})$ Hessian-vector products and $\tilde{\mathcal{O}}(\ell^{2}\rho^{0.5}\mu^{-2}\epsilon^{-1.5})$ gradient ascent steps. Numerical results show the efficiency of both proposed algorithms. 
\keywords{trust-region, Levenberg-Marquardt algorithm, nonconvex-strongly concave minimax problems, second-order algorithms}
\subclass{90C47, 90C26, 90C30}
\end{abstract}

\section{Introduction}
In this paper, we consider  the following unconstrained minimax optimization problem:
\begin{equation}\label{P}
\min_{x\in\mathbb{R}^{n}} \max_{y\in\mathbb{R}^{m}} f(x,y),
\end{equation}
where $f(x,y):\mathbb{R}^{n} \times \mathbb{R}^{m} \to \mathbb{R} $ is a continuously differentiable function. Throughout this paper, we assume that the function $f(x, y)$ is $\mu$-strongly concave in $y$ for all $x\in\mathbb{R}^{n}$ but possibly nonconvex in $x$.
Recently, the minimax optimization problem has stimulated the research interest of many international scholars and has broad applications in machine learning and data science, such as generative adversarial networks (GANs) \cite{Goodfellow2014GenerativeAN,Arjovsky2017WassersteinGA}, adversarial learning \cite{Sinha2018CertifiableDR}, AUC maximization \cite{Hanley1982TheMA,Ying2016StochasticOA}, and robust optimization \cite{Ben-Tal2009RoubstOP,Gao2016DistributionallyRS}.

Three main types of optimization algorithms have been developed to solve the minimax problem \eqref{P}, including zeroth-order, first-order, and second-order optimization algorithms, which exploit the function value, gradient, and Hessian information of the objective function, respectively. Compared with zeroth-order and first-order optimization algorithms, second-order optimization algorithms have received more attention due to their faster convergence speed. In addition, second-order optimization algorithms can handle saddle points and local minima more effectively, thereby increasing the probability of finding the global optimal solution. In this paper, we focus on the second-order optimization algorithms for solving \eqref{P}.

In order to solve some convex-concave minimax optimization problems with special structures, various second-order algorithms have been proposed recently, such as implicit algorithms \cite{Monteiro2012IterationOA,Ostroukhov2020TensorMF,Bullins2022HigherMF,Jiang2022GeneralizedOM} and explicit algorithms \cite{Huang2022CubicRN,Huang2022AnAR,Jiang2024AdaptiveAO,Lin2022ExplicitSM,Nesterov2006bCubicRO}. In \cite{Adil2022LineSM,Lin2022PerseusAS}, it is proved that the iteration complexity of the second-order algorithm for solving convex-concave minimax optimization problems is lower bounded by $\Omega(\epsilon^{-2/3})$. In addition, in order to eliminate the algorithm's need for problem parameter information, such as the Lipschitz constant and the upper bound of the distance from the initial point to the optimal solution, some parameter-free second-order algorithms have been proposed in \cite{Jiang2024AdaptiveAO,Liu2022RegularizedNM,Wang2024ParameterFS}.

However, there are relatively few studies on second-order algorithms for solving nonconvex minimax optimization problems \eqref{P}. Existing second-order algorithms can be divided into two categories, i.e., cubic regularization type algorithms \cite{Luo2022FindingSS,Chen2023ACR} and trust-region type algorithms \cite{Yao2024TwoTR}. For the cubic regularization type algorithms, Luo et al. \cite{Luo2022FindingSS} proposed a Minimax Cubic Newton (MCN) algorithm, which can find an $\mathcal{O}(\epsilon,\sqrt{\epsilon})$-second-order stationary point with an iteration complexity of $\mathcal{O}(\ell^{1.5}\rho^{0.5}\mu^{-1.5} \epsilon^{-1.5})$.
MCN performs accelerated gradient ascent method to update $y$, which is then used to estimate the gradient and Hessian of $P(x) =\max_{y\in \mathbb{R}^m}f(x,y)$ involved in the cubic regularization update for $x$.
The proposed inexact variant of the MCN algorithm (IMCN) solves the cubic subproblem via perturbative gradient descent and matrix Chebyshev expansion, which requires a total of $\tilde{\mathcal{O}}(\ell^{2.5}\mu^{-1.5}\epsilon^{-2})$ Hessian-vector product calls and $\tilde{\mathcal{O}}(\ell^2\rho^{0.5}\mu^{-2}\epsilon^{-1.5})$ gradient ascent step calls to find an $\mathcal{O}(\epsilon,\sqrt{\epsilon})$-second-order stationary point with high probability.
Chen et al. \cite{Chen2023ACR} proposed an inexact Cubic-LocalMinimax (ICLM) algorithm with the same complexity bound as \cite{Luo2022FindingSS}.
 In addition to the cubic regularization type algorithm, the trust region type method is also a very representative second-order method, due to its robustness and excellent numerical performance in dealing with nonconvex problems.
For the trust-region type algorithms, Yao and Xu \cite{Yao2024TwoTR} first proposed a minimax trust-region (MINIMAX-TR) algorithm, which can be viewed as an inexact trust-region method with fixed trust-region radius of $\mathcal{O}(\sqrt{\epsilon})$, and has the same iteration complexity results as \cite{Luo2022FindingSS,Chen2023ACR}. Nevertheless, this strategy is relatively conservative, leading to poor numerical performance. To overcome this shortcoming, they further proposed a minimax trust-region algorithm with contractions and expansions (MINIMAX-TRACE) algorithm. 
However, the iteration complexity of the MINIMAX-TRACE algorithm is slightly worse, $\mathcal{O}(\ell^{4.5}\rho^{1.5}\mu^{-4.5} \epsilon^{-1.5})$, and needs function information of $P(x)$ to calculate the ratio of actual-to-predicted reduction and additional assumption that the gradient of $ P(x)$ is upper bounded by a constant. There is also no analysis of the complexity of the Hessian-vector product that allows for inexact trust region subproblem solutions.

\subsection{Contributions}
In this paper, we focus on developing a more efficient and robust second-order algorithm to solve the nonconvex-strongly concave minimax optimization problems. We summarize our contributions as follows.

We first propose a gradient norm regularized trust-region (GRTR) algorithm to solve the nonconvex-strongly concave minimax optimization problems, where the objective function of the trust-region subproblem in each iteration adopts a regularized version of the Hessian matrix, and the regularization coefficient and the radius of the ball constraint are proportional to the root mean square of the gradient norm. It can be shown that the iteration complexity of the GRTR algorithm to obtain an $\mathcal{O}(\epsilon,\sqrt{\epsilon})$-second-order stationary point is bounded by $\tilde{\mathcal{O}}(\ell^{1.5}\rho^{0.5}\mu^{-1.5}\epsilon^{-1.5})$, where $\tilde{\mathcal{O}}$ hides only absolute constants and logarithmic factors. Ignoring the logarithmic factor, this iteration complexity is consistent with the state-of-art existing second-order algorithms \cite{Luo2022FindingSS,Chen2023ACR,Yao2024TwoTR}. In addition, compared with the MINIMAX-TR algorithm \cite{Yao2024TwoTR}, our method does not need to always fix the trust-region radius, but the trust-region radius varies according to the gradient norm, which leads to better numerical performance. Compared with the MINIMAX-TRACE algorithm \cite{Yao2024TwoTR}, we do not require additional function information and do not need to assume that the gradient has a uniform upper bound. 

To solve large-scale problems, we propose an inexact GRTR algorithm variant (IGRTR) that uses either truncated conjugate gradient or negative curvature to solve the trust region subproblem inexactly. The IGRTR algorithm requires a total of $\tilde{\mathcal{O}}(\ell^{2.25}\rho^{0.25}\mu^{-1.75}\epsilon^{-1.75})$ Hessian-vector products and $\tilde{\mathcal{O}}(\ell^2\rho^{0.5}\mu^{-2}\epsilon^{-1.5})$ gradient ascent steps to find $\mathcal{O}(\epsilon,\sqrt{\epsilon})$-second-order stationary points with high probability. Compared with the IMCN algorithm \cite{Luo2022FindingSS} and the ICLM algorithm \cite{Chen2023ACR}, the proposed IGRTR algorithm has better complexity bounds in terms of Hessian-vector product calls, which is the main computational effort when solving large-scale problems.

 We further propose a Levenberg-Marquardt algorithm with a gradient norm regularization coefficient and use the negative 
curvature direction to correct the iteration direction (LMNegCur), which does not need to solve the trust-region subproblem at each iteration.  The LMNegCur algorithm can also achieve an $\mathcal{O}(\epsilon,\sqrt{\epsilon})$-second-order stationary point within $\tilde{\mathcal{O}}(\ell^{1.5}\rho^{0.5}\mu^{-1.5}\epsilon^{-1.5})$ number of iterations.

	\begin{table}[!ht]
	\centering
\caption{Comparison of second-order algorithms for solving nonconvex-strongly concave minimax problems to $\mathcal{O}(\epsilon,\sqrt{\epsilon})$-second-order stationary points.}\label{complexity}
	\begin{threeparttable}
	\centering
\begin{tabular}{c|c|c|c} \hline
	Algorithm & Type  & $\#$ of outer iterations & $\#$ of Hessian-vector products \\ \hline
	IMCN \cite{Luo2022FindingSS} & CR\tnote{1} & $\mathcal{O}(\ell^{1.5}\rho^{0.5}\mu^{-1.5}\epsilon^{-1.5})$  & $\tilde{\mathcal{O}}(\ell^{2.5}\mu^{-1.5}\epsilon^{-2})$ \\
	ICLM \cite{Chen2023ACR} & CR  & $\mathcal{O}(\ell^{1.5}\rho^{0.5}\mu^{-1.5}\epsilon^{-1.5})$  & $\tilde{\mathcal{O}}(\ell^{2.5}\mu^{-1.5}\epsilon^{-2})$ \\
	MINIMAX-TR\cite{Yao2024TwoTR}& TR\tnote{2} &$\mathcal{O}(\ell^{1.5}\rho^{0.5}\mu^{-1.5}\epsilon^{-1.5})$ & --\tnote{4} \\
	MINIMAX-TRACE\cite{Yao2024TwoTR} & TR &$\mathcal{O}(\ell^{4.5}\rho^{1.5}\mu^{-4.5} \epsilon^{-1.5})$ & --\\
	IGRTR  & TR & $\tilde{\mathcal{O}}(\ell^{1.5}\rho^{0.5}\mu^{-1.5}\epsilon^{-1.5})$  & $\tilde{\mathcal{O}}(\ell^{2.25}\rho^{0.25}\mu^{-1.75}\epsilon^{-1.75})$ \\
	ILMNegCur  & LM\tnote{3} & $\tilde{\mathcal{O}}(\ell^{1.5}\rho^{0.5}\mu^{-1.5}\epsilon^{-1.5})$   & $\tilde{\mathcal{O}}(\ell^{2.25}\rho^{0.25}\mu^{-1.75}\epsilon^{-1.75})$ 	\\	\hline
\end{tabular}
    \begin{tablenotes}
	\footnotesize
	\item [1] ``CR'' denotes cubic regularization type algorithms.
    \item [2] ``TR'' denotes trust-region type algorithms.
    \item [3] ``LM'' denotes Levenberg-Marquardt type algorithms.
	\item [4] ``--'' indicates that the Hessian-vector product complexity of the proposed algorithm is not analyzed when the subproblems are not solved exactly.
\end{tablenotes}
	\end{threeparttable}
\end{table}

\subsection{Related Works}
For solving convex-concave minimax optimization problems, there are currently a number of research results on first-order optimization algorithms, such as the gradient extrapolation algorithm \cite{Korpelevich1976TheEM}, the mirror proximal algorithm \cite{Nemirovski2004ProxWR}, the dual extrapolation algorithm \cite{Nesterov2007DualEA}, and the optimistic gradient descent algorithm (OGDA) \cite{Popov1980AMO}, which can achieve the optimal iteration complexity among algorithms that only use first-order information to solve convex-concave minimax problems, namely $\mathcal{O}(\epsilon^{-1})$\cite{Ouyang2021LowerCB}.

There are also some existing research results on first-order algorithms for solving nonconvex-strongly concave minimax problems~\cite{Bot,Jin,Lin2019,Lin2020,Lu,Rafique}. To obtain an $\epsilon$-first-order stationary point of $P(x)$, or the stationary points of $f(x,y)$, these algorithms can achieve an iteration complexity of $\tilde{\mathcal{O}}( \kappa_y^2\epsilon ^{-2} )$, where $\kappa_y$ is the condition number of $f(x,\cdot)$. In addition, Zhang et al. \cite{Zhang2021} can improve the iteration complexity to $\tilde{\mathcal{O}}(\ell^{1.5}\mu^{-0.5}\epsilon ^{-2})$ by introducing a general acceleration framework.

For general nonconvex-concave minimax problems, there are two types of algorithms, i.e., multi-loop algorithms \cite{Kong,Lin2020,Nouiehed,Ostro,Rafique,Thek2019} and single-loop algorithms \cite{Lin2019,Lu,Xu,Zhang,Zheng2023}. The MINIMAX-PPA algorithm proposed in \cite{Lin2020} achieves the best known iteration complexity, i.e., $\tilde{\mathcal{O}}( \epsilon ^{-2.5})$, among multi-loop algorithms for solving nonconvex-concave minimax problems. For solving nonconvex-concave minimax problems, the unified single-loop alternating gradient projection (AGP) algorithm \cite{Xu} was the first single-loop algorithm to achieve an iteration complexity of $\mathcal{O}( \epsilon ^{-4})$. Later, the smoothed GDA algorithm was also proposed \cite{Zhang}, achieving the same iteration complexity. For solving the special nonconvex-linear minimax problems, Pan et al. \cite{Pan} proposed a new alternating gradient projection algorithm with an iteration complexity of $\mathcal{O}( \epsilon ^{-3})$. Very recently, Xu et al. \cite{XuJiangLiuSo} proposed a riemannian alternating descent ascent algorithmic framework for nonconvex-linear minimax problems on riemannian manifolds, achieving the same complexity.

There are also some research results on the zeroth-order method for
solving the black-box minimax problems, where we can only access function values. For the nonconvex-strongly concave setting, various algorithms have been proposed, eg., the ZO-Min-Max algorithm \cite{Liu}, the zeroth-order (stochastic) gradient descent ascent (ZO-GDA) algorithm \cite{Wang}, the zeroth-order (stochastic) gradient descent multi-step ascent (ZO-GDMSA) algorithm \cite{Wang}, and the zeroth-order variance reduced gradient descent ascent
(ZO-VRGDA) algorithm \cite{Xu20}. Among these algorithms, the ZO-GDMSA algorithm achieves a complexity bound of $\tilde{\mathcal{O}}(\kappa_y^{2}\epsilon^{-2})$ in the deterministic setting, while the ZO-VRGDA algorithm achieves a complexity bound of $\mathcal{O}(\kappa_y^{3}\epsilon^{-3})$ in the stochastic setting, where $\kappa_y$ is the condition number for $f(x,\cdot)$.
For the nonconvex-concave setting, Xu et al. \cite{xu21zeroth} proposed a zeroth-order alternating stochastic gradient projection (ZO-AGP) algorithm with aniteration complexity of $\mathcal{O}(\epsilon^{-4})$. For the nonconvex-nonconcave setting, Xu et al. \cite{xu22zeroth} proposed a zeroth-order alternating gradient descent ascent (ZO-AGDA)
algorithm and a zeroth-order variance reduced alternating gradient descent ascent (ZO-VRAGDA) algorithm under the deterministic and the stochastic setting, respectivelly. The iteration complexity of the ZO-AGDA algorithm and the ZO-VRAGDA algorithm have been proved to be $\mathcal{O}(\epsilon^{-2})$ and $\mathcal{O}(\epsilon^{-3})$, respectively.


{\bfseries Notation}.
We use $\|\cdot\|$ to denote the spectral norm of matrices and Euclidean norm of vectors. We use $I_n \in \mathbb{R}^{n\times n}$ to denote identity matrix and $\lceil\cdot\rceil$ to denote the ceiling function.
For a function $f(x,y):\mathbb{R}^m\times\mathbb{R}^n\rightarrow \mathbb{R}$, we use $\nabla_{x} f(x,y)$ (or $\nabla_{y} f(x, y)$) to denote the partial gradient of $f$ with respect to the first variable (or the second variable) at point $(x, y)$. 
Additionally, we use $\nabla f(x, y)=(\nabla_{x}f(x,y), \nabla_{y}f(x,y))$ to denote the full gradient of $f$ at point $(x, y)$. 
Similarly, we denote $\nabla_{xx}^2 f(x, y)$, $ \nabla^2_{xy} f(x, y)$, $\nabla_{yx}^2f(x,y)$ and $ \nabla_{yy}^2 f(x, y)$ as the second-order partial derivatives of $f(x,y)$ at point $(x, y)$. We use the notation $\mathcal{O} (\cdot), \Omega(\cdot)$ to hide only absolute constants which do not depend on any problem parameter, and $\tilde{\mathcal{O}} (\cdot) $ to hide only absolute constants and logarithmic factors. Given a discrete set $\mathcal{S}$, we denote its cardinality by $|\mathcal{S}|$. Denote $P(x) = \max_{y\in \mathbb{R}^m}f(x,y)$, $y^*(x)=\mathop{\arg\max}_{y\in\mathbb{R}^m} f(x, y)$, and $H(x,y):= [\nabla_{xx}^2 f - \nabla_{xy}^2 f (\nabla_{yy}^2 f)^{-1}\nabla_{yx}^2 f](x,y)$.
Finally, we define the vector $z = [x;y] \in \mathbb{R}^{m+n}$.

\section{A gradient norm regularized trust-region algorithm}
 In this section, we propose a gradient norm regularized trust-region (GRTR) algorithm for solving \eqref{P}. The proposed algorithm is inspired by the universal trust-region (UTR) method \cite{Jiang2023AUT}, which is a trust-region 
 variant for solving $\min_{x\in\mathbb{R}^{n}} P(x)$. In particular, at each iteration, the UTR method solves the following gradient-regularization trust-region subproblem:
 \begin{equation*}
 	\begin{array}{l}
 		\mathop{\min}\limits_{s}~ \nabla P(x_t) ^\top s + \frac{1}{2} s^\top (\nabla^2 P(x_t)+ \sigma_t\|\nabla P(x_t)\|^{1/2}I) s \\
 		~\text{s.t.}~~ \|s\| \leqslant r_t \|\nabla P(x_t)\|^{1/2},
 	\end{array}
 \end{equation*}
 where  $\sigma_t$ and $r_t$ are iteration-dependent  parameters,  the objective function of the trust-region subproblem uses a regularized version of the Hessian matrix, and the regularization coefficient and the radius of the ball constraint are proportional to the root mean square of the gradient norm.
 The proposed GRTR algorithm can be regarded as an inexact generalization version of UTR for
 solving \eqref{P} with $P(x) := \max_{y\in\mathbb{R}^{m}} f(x, y)$,
 which consists of the following two important algorithmic components at each iteration:
 \begin{itemize}
 	\item Gradient ascent update: for a given $x_t$, run $N_t$ iterations of Nesterov's accelerated \cite{Nesterov2018Lectures} gradient ascent step to obtain an approximated maximizer 
 	\begin{align*}
 		y_t \approx y^*(x_t) = \arg\max_{y\in \mathbb{R}^m}f(x,y).
 	\end{align*}
    \item Trust-region update: for given $(x_t,y_t)$, compute a direction $s_t$ such that it is a solution of the following  trust-region subproblem:
   \begin{equation}\label{P:tr}
   	\begin{array}{l}
        \mathop{\min}\limits_{s}~ g_t ^\top s + \frac{1}{2} s^\top (H_t+ \sigma\|g_t\|^{1/2}I) s \\
        ~\text{s.t.}~~ \|s\| \leqslant r \max\{\|g_t\|^{1/2}, \epsilon^{1/2}\},
   	\end{array}
   \end{equation}
   where $g_t=\nabla_x f(x_t,y_t)$ and  $H_t=H(x_t,y_t)$ are inexact first-order and second-order information of $P(x)$ at $x_t$, respectively; $\sigma$ and $r$ are constants, which will be specified later.
 \end{itemize} 
The detailed algorithm is formally stated in Algorithm \ref{alg:1}.
\begin{algorithm}[th]
	\caption{A gradient norm regularized trust-region (GRTR) algorithm}
	\label{alg:1}
	\begin{algorithmic}
		\STATE{\textbf{Step 1}: Input $x_0,y_{-1},\eta_y, \theta, \sigma,r, N_t,\epsilon$; Set $t=0$.}
		\STATE{\textbf{Step 2}: Update $y_t$:} 
		\STATE{\quad\textbf{(a)}: Set $k=0$, $y_k^t=\tilde{y}_k^t=y_{t-1}$.}
		\STATE{\quad\textbf{(b)}: Update $y_k^t$ and $\tilde{y}_k^t$:
		\begin{align}
		y_{k+1}^t = \tilde{y}_{k}^t + \eta_y \nabla_y f(x_t, \tilde{y}_k^t), ~
			\tilde{y}_{k+1}^t = y_{k+1}^t +  \theta(y_{k+1}^t-y_{k}^t).
		\end{align}
			}
	    \STATE{\quad\textbf{(c)}: If $k \geqslant N_t-1$, set $y_t={y}_{N_t}^t$; otherwise, set $k=k+1, $ go to Step 2(b).}
       \STATE{\textbf{Step 3}: Set  $g_t=\nabla_x f(x_t,y_t)$ and  $H_t=H(x_t,y_t)$}.
        \STATE{\textbf{Step 4}: Update $s_t$ by solving the following trust-region subproblem: 
		\begin{equation}\label{tr}
	     	\mathop{\min}\limits_{\|s\| \leqslant r  \max\{\|g_t\|^{1/2}, \epsilon^{1/2}\}}  g_t ^\top s + \frac{1}{2} s^\top (H_t+ \sigma\|g_t\|^{1/2}I) s. 
		\end{equation}}
	    \STATE{\textbf{Step 5}: Update $x_t$: $x_{t+1}=x_t +s_t$.}
		\STATE{\textbf{Step 6}: If some stationary condition or stopping criterion is satisfied, stop;
        otherwise, set $t=t+1, $ go to Step 2(a).}
	\end{algorithmic}
\end{algorithm}

Note that in the trust-region subproblem \eqref{tr}, the trust-region radius varies with the gradient norm. When $\|g_t\| \leqslant \epsilon$, we fix the trust-region radius to $\mathcal{O}(\sqrt{\epsilon})$. If we set $\sigma= 0$ and fix the trust-region radius to $\mathcal{O}(\sqrt{\epsilon})$, then the GRTR algorithm is equivalent to the MINIMAX-TR algorithm \cite{Yao2024TwoTR}. Compared with the MINIMAX-TRACE algorithm \cite{Yao2024TwoTR}, the GRTR algorithm does not need to calculate the function value of $P(x)$. Note that in the trust-region subproblem \eqref{tr}, the trust-region radius varies with the gradient norm. When $\|g_t\| \leqslant \epsilon$, we fix the trust-region radius to $\mathcal{O}(\sqrt{\epsilon})$. If we set $\sigma= 0$ and fix the trust-region radius to $\mathcal{O}(\sqrt{\epsilon})$, then the GRTR algorithm is equivalent to the MINIMAX-TR algorithm \cite{Yao2024TwoTR}. Compared with the MINIMAX-TRACE algorithm \cite{Yao2024TwoTR}, the GRTR algorithm does not need to calculate the function value of $P(x)$. 

In the next section, we will prove that for solving \eqref{P} to obtain an $\mathcal{O}(\epsilon,\sqrt{\epsilon})$-second-order stationary point of $P(x)$, the iteration complexity of the GRTR algorithm  is $\tilde{\mathcal{O}}(\ell^{1.5}\rho^{0.5}\mu^{-1.5}\epsilon^{-1.5})$.  Our complexity analysis extends the framework of monotonic function value reduction of $P(x)$ from \cite{Jiang2023AUT}, categorizing iterations into distinct cases where each iteration either achieves sufficient descent in the objective value or induces linear contraction of the gradient norm. The key distinction lies in addressing the challenge of GRTR's trust-region subproblem: the inherent inexactness from approximate gradient $g_t=\nabla_x f(x_t,y_t)$ and Hessian $H_t=H(x_t,y_t)$ introduces significant challenges absent in standard trust-region analyses. Our main contribution centers on rigorously quantifying these approximation errors through novel inner-loop convergence characterizations, coupled with systematic parameter selection strategies. This enables us to establish a noise-resistant iteration process with guaranteed monotonic descent while maintaining the overall iteration complexity - critical advancements that distinguish our analysis from conventional trust-region theory.

\subsection{Complexity analysis}

Before we prove the iteration complexity of the GRTR algorithm, we first make the following assumptions about $f(x,y)$ in this paper.

\begin{ass}\label{ass}
$f(x,y)$ satifies the following assumptions: 
	\begin{enumerate}
		\item[(\romannumeral1)] $f(\cdot,\cdot)$ is $\ell$-Lipschitz continuous, i.e., for any $(x,y), (x', y')$, it holds
		\[
		\|\nabla f(x,y) - \nabla f(x', y') \|\leqslant \ell \| (x,y) - (x', y')\|.
		\]
		\item[(\romannumeral2)] The Jacobian matrices $\nabla_{xx}^2 f(x, y)$, $\nabla_{xy}^2 f(x, y)$, $\nabla_{yx}^2 f(x, y)$ and $ \nabla_{yy}^2 f(x, y)$ are $\rho$-Lipschitz continuous.
		\item[(\romannumeral3)] $P(x)$ is bounded below by a constant $P^*$.
	\end{enumerate}
\end{ass}
Next, similar to \cite{Luo2022FindingSS,Chen2023ACR,Yao2024TwoTR} , the first-order and second-order stationary points are defined as follows.
\begin{defi}\label{approximate FSP}
	Suppose that Assumption \ref{ass} holds. We call $x$ an $\epsilon$-first-order stationary point of $P(x)$ if $\|\nabla{P(x)}\| \leqslant \epsilon$.
\end{defi}

\begin{defi}\label{approximate SSP}
		Suppose that Assumption \ref{ass} holds. We call $x$ an $\mathcal{O}(\epsilon, \sqrt{\epsilon})$-second-order stationary point of $P(x)$ if $\|\nabla P(x)\| \leqslant \xi\epsilon$ and $\nabla^2 P(x) \succeq -\theta\sqrt{\epsilon}I$, where $\xi$ and $\theta$ are positive constants independent of $\epsilon$.
\end{defi}

\begin{lem}[\cite{Chen2023ACR}]\label{lem1}
Suppose that Assumption \ref{ass} holds. $P(x)$ has
 $L_1:= (\kappa+1)\ell$-Lipschitz continuous gradient with $\kappa=\ell/\mu$. Furthermore, $y^*(x)$ is well-defined and $\kappa$-Lipschitz continuous, and $\nabla P(x) = \nabla_x f(x, y^*(x))$.
\end{lem}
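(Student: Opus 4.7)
The plan is to establish the three claims in order: well-definedness and Lipschitz continuity of $y^*(\cdot)$, the Danskin-type gradient formula for $P$, and finally the Lipschitz constant of $\nabla P$.

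First I would argue that $y^*(x)$ is well-defined. By Assumption \ref{ass}(i), $f(x,\cdot)$ is continuously differentiable, and by hypothesis it is $\mu$-strongly concave; therefore $f(x,\cdot)$ is coercive and admits a unique maximizer, which I denote $y^*(x)$. The first-order optimality condition gives $\nabla_y f(x, y^*(x)) = 0$ for every $x$.

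Next I would prove that $y^*$ is $\kappa$-Lipschitz. Fix $x_1,x_2$ and set $\Delta = y^*(x_1)-y^*(x_2)$. Strong concavity of $f(x_1,\cdot)$ yields $\langle \nabla_y f(x_1, y^*(x_1)) - \nabla_y f(x_1, y^*(x_2)), \Delta \rangle \leqslant -\mu \|\Delta\|^2$. Using $\nabla_y f(x_1, y^*(x_1)) = 0 = \nabla_y f(x_2, y^*(x_2))$, this rearranges to
\begin{equation*}
\mu \|\Delta\|^2 \;\leqslant\; \langle \nabla_y f(x_1, y^*(x_2)) - \nabla_y f(x_2, y^*(x_2)), \Delta \rangle \;\leqslant\; \ell \|x_1 - x_2\|\,\|\Delta\|,
\end{equation*}
where the second inequality uses Cauchy--Schwarz together with the $\ell$-Lipschitz continuity of $\nabla f$ from Assumption \ref{ass}(i). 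Dividing by $\|\Delta\|$ (trivial if $\Delta=0$) gives $\|y^*(x_1)-y^*(x_2)\|\leqslant (\ell/\mu)\|x_1-x_2\| = \kappa \|x_1-x_2\|$.

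Then I would establish $\nabla P(x) = \nabla_x f(x, y^*(x))$ via Danskin's theorem: since $f(x,\cdot)$ is continuously differentiable, strongly concave, and its unique maximizer $y^*(x)$ is continuous (by the Lipschitz bound just proved), the envelope function $P(x) = \max_{y} f(x,y)$ is differentiable with gradient equal to $\nabla_x f$ evaluated at the maximizer. Finally, for any $x_1,x_2$,
\begin{equation*}
\|\nabla P(x_1) - \nabla P(x_2)\| = \|\nabla_x f(x_1, y^*(x_1)) - \nabla_x f(x_2, y^*(x_2))\| \leqslant \ell\bigl(\|x_1-x_2\| + \|y^*(x_1)-y^*(x_2)\|\bigr) \leqslant (\kappa+1)\ell\,\|x_1-x_2\|,
\end{equation*}
using Assumption \ref{ass}(i), the triangle inequality for the product norm, and the Lipschitz bound on $y^*$.

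The argument is entirely standard and has no real obstacle; the only mildly delicate point is invoking Danskin's theorem, which is legitimate here because strong concavity guarantees a unique, continuous argmax and $f$ is jointly $C^1$.
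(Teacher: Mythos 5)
Your proof is correct and follows the standard route (unique maximizer from strong concavity, the $\mu$-strong-monotonicity argument for the $\kappa$-Lipschitz bound on $y^*$, Danskin's theorem for $\nabla P(x)=\nabla_x f(x,y^*(x))$, and the triangle inequality for the $(\kappa+1)\ell$ constant), which is the same argument given in the cited reference \cite{Chen2023ACR}; the paper itself states this lemma without proof. No gaps.
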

Denote $H(x,y)$ $=$ $[\nabla_{xx}^2 f - \nabla_{xy}^2 f (\nabla_{yy}^2 f)^{-1} \nabla_{yx}^2 f](x,y)$.
\begin{lem}[\cite{Chen2023ACR}]\label{hessian of P}
	Suppose that Assumption \ref{ass} holds. $H(x,y)$ is a Lipschitz continuous mapping with Lipschitz constant $L_H = \rho (1 + \kappa)^2$. Additionally, the Hessian of $P(x)$ satisfies $\nabla^2 P(x)=H(x,y^*(x))$, and it is Lipschitz continuous with constant $L_2 = \rho (1+\kappa)^3$.
\end{lem}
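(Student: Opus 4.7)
The plan is to break the statement into three pieces: (i) verify the identity $\nabla^2 P(x) = H(x,y^*(x))$ via the implicit function theorem, (ii) bound the Lipschitz constant of the Schur-complement mapping $H(x,y)$ on $\mathbb{R}^n\times\mathbb{R}^m$, and (iii) combine (i) and (ii) with the $\kappa$-Lipschitz property of $y^*(x)$ from Lemma~\ref{lem1} to get the Lipschitz constant of $\nabla^2 P$.

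For (i), since $f(x,\cdot)$ is $\mu$-strongly concave, $y^*(x)$ is the unique solution of $\nabla_y f(x,y)=0$, so $\nabla_{yy}^2 f(x,y^*(x))$ is invertible. Differentiating the identity $\nabla_y f(x,y^*(x))\equiv 0$ gives $\nabla y^*(x) = -(\nabla_{yy}^2 f)^{-1}\nabla_{yx}^2 f$ evaluated at $(x,y^*(x))$. Then $\nabla P(x)=\nabla_x f(x,y^*(x))$ (which is the content of Lemma~\ref{lem1}), and one more differentiation produces
\begin{equation*}
\nabla^2 P(x) = \nabla_{xx}^2 f + \nabla_{xy}^2 f\,\nabla y^*(x) = \bigl[\nabla_{xx}^2 f - \nabla_{xy}^2 f (\nabla_{yy}^2 f)^{-1}\nabla_{yx}^2 f\bigr](x,y^*(x)) = H(x,y^*(x)).
\end{equation*}

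For (ii), I expect this to be the main obstacle, because it requires controlling the variation of the matrix inverse. Denote $z=(x,y)$ and write, for $z_1,z_2$,
\begin{equation*}
H(z_1)-H(z_2) = \bigl[\nabla_{xx}^2 f(z_1)-\nabla_{xx}^2 f(z_2)\bigr] - \bigl[A(z_1)B(z_1)^{-1}C(z_1)-A(z_2)B(z_2)^{-1}C(z_2)\bigr],
\end{equation*}
with $A=\nabla_{xy}^2 f$, $B=\nabla_{yy}^2 f$, $C=\nabla_{yx}^2 f$. Using the telescoping identity $A_1B_1^{-1}C_1-A_2B_2^{-1}C_2 = (A_1-A_2)B_1^{-1}C_1 + A_2(B_1^{-1}-B_2^{-1})C_1 + A_2B_2^{-1}(C_1-C_2)$ together with $B_1^{-1}-B_2^{-1}=B_1^{-1}(B_2-B_1)B_2^{-1}$, and then applying the bounds $\|A\|,\|B\|,\|C\|\leq \ell$ (from Assumption~\ref{ass}(i)), $\|B^{-1}\|\leq 1/\mu$ (from $\mu$-strong concavity), and the $\rho$-Lipschitz continuity of each block from Assumption~\ref{ass}(ii), the three pieces contribute $\rho\kappa$, $\rho\kappa^2$, and $\rho\kappa$ respectively; adding the $\rho$ from the $\nabla_{xx}^2 f$ term yields the bound $\rho(1+2\kappa+\kappa^2)\|z_1-z_2\|=\rho(1+\kappa)^2\|z_1-z_2\|$, which gives $L_H$.

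For (iii), apply the Lipschitz bound just obtained with $z_i=(x_i,y^*(x_i))$. Using the elementary inequality $\sqrt{a^2+b^2}\le a+b$ for $a,b\ge 0$ and the $\kappa$-Lipschitz continuity of $y^*$ from Lemma~\ref{lem1},
\begin{equation*}
\|(x_1,y^*(x_1))-(x_2,y^*(x_2))\| \le \|x_1-x_2\|+\|y^*(x_1)-y^*(x_2)\| \le (1+\kappa)\|x_1-x_2\|,
\end{equation*}
so $\|\nabla^2 P(x_1)-\nabla^2 P(x_2)\|=\|H(x_1,y^*(x_1))-H(x_2,y^*(x_2))\|\le L_H(1+\kappa)\|x_1-x_2\|=\rho(1+\kappa)^3\|x_1-x_2\|$, identifying $L_2=\rho(1+\kappa)^3$ and completing the proof. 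The only delicate accounting is in step (ii) (the telescoping through the inverse), while steps (i) and (iii) are straightforward once that bound is in hand.
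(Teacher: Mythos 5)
Your proof is correct. The paper itself gives no argument for this lemma --- it is imported verbatim from Chen et al.\ \cite{Chen2023ACR} --- and your derivation reconstructs the standard proof used there: the implicit-function computation $\nabla y^*(x) = -(\nabla_{yy}^2 f)^{-1}\nabla_{yx}^2 f$ giving $\nabla^2 P(x)=H(x,y^*(x))$, the three-term telescoping of $A_1B_1^{-1}C_1-A_2B_2^{-1}C_2$ with $\|A\|,\|C\|\leqslant\ell$, $\|B^{-1}\|\leqslant 1/\mu$ yielding $L_H=\rho(1+2\kappa+\kappa^2)=\rho(1+\kappa)^2$, and the composition with the $\kappa$-Lipschitz map $y^*$ giving the extra factor $(1+\kappa)$. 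The constant accounting in each of the three pieces ($\rho\kappa$, $\rho\kappa^2$, $\rho\kappa$) checks out, so there is nothing to correct.
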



We first  present some preliminary analysis of our method. The following lemma gives the optimality conditions for the trust-region subproblem \eqref{tr}, which can be directly obtained from Theorem 4.1 in \cite{Jorge1999NumericalO} and will not be repeated here.
\begin{lem}\label{oc}
	The direction $s_t$ is a solution of \eqref{tr} if and only if there exists a dual multiplier $\lambda_t \geqslant 0 $ such that
\begin{numcases}{}	
	\| s_t\| \leqslant r  \max\{\|g_t\|^{1/2}, \epsilon^{1/2}\} \label{oc.primal}, \\
	\lambda_t (\|s_t\|-r \max\{\|g_t\|^{1/2}, \epsilon^{1/2}\} )=0 \label{oc.com},                    \\
	(H_t + \sigma \|g_t\|^{1/2} I + \lambda_t I ) s_t = -g_t, \label{oc.fir}\\
	H_t + \sigma \|g_t\|^{1/2} I + \lambda_t I \succeq 0. \label{oc.sec}
\end{numcases}
\end{lem}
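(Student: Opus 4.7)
The plan is to recognize that the subproblem \eqref{tr} is a standard trust region problem with a specific choice of effective Hessian and radius, and then to invoke the classical characterization of trust-region minimizers (Theorem~4.1 in \cite{Jorge1999NumericalO}). Concretely, I would set $B_t := H_t + \sigma \|g_t\|^{1/2} I$ and $\Delta_t := r\max\{\|g_t\|^{1/2}, \epsilon^{1/2}\}$, so that \eqref{tr} reads $\min_{\|s\| \leqslant \Delta_t} g_t^\top s + \tfrac{1}{2} s^\top B_t s$. The claimed four conditions are then nothing but the standard trust-region optimality system $(B_t + \lambda_t I)s_t = -g_t$, $\lambda_t(\|s_t\| - \Delta_t) = 0$, $\|s_t\| \leqslant \Delta_t$, and $B_t + \lambda_t I \succeq 0$, where $\lambda_t \geqslant 0$ plays the role of the dual multiplier for the ball constraint.

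For the necessity direction, I would start from the KKT conditions of \eqref{tr}, which hold because the single ball constraint is qualified (Slater is trivially satisfied at $s = 0$). KKT directly yields primal feasibility, complementary slackness, and the stationarity equation \eqref{oc.fir}. The nontrivial part is the semidefiniteness \eqref{oc.sec}. For this I would argue as in the classical proof: if $\|s_t\| < \Delta_t$, then $\lambda_t = 0$ by complementarity and $s_t$ is an unconstrained minimizer of the quadratic, which forces $B_t \succeq 0$; if $\|s_t\| = \Delta_t$, one compares $s_t$ with any feasible perturbation $s_t + d$ with $\|s_t + d\| \leqslant \Delta_t$ and uses the identity $\phi(s_t+d) - \phi(s_t) = \tfrac{1}{2} d^\top (B_t + \lambda_t I) d + \text{(boundary terms)}$, where $\phi$ is the Lagrangian, to conclude that $d^\top(B_t + \lambda_t I)d \geqslant 0$ for a sufficient family of directions, and then extend to all of $\mathbb{R}^n$ by density/continuity.

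For the sufficiency direction, which is arguably the cleaner half, I would take any feasible $s$ and write
\begin{equation*}
\phi_t(s) - \phi_t(s_t) = \tfrac{1}{2}(s-s_t)^\top (B_t + \lambda_t I)(s - s_t) + \tfrac{1}{2}\lambda_t(\|s_t\|^2 - \|s\|^2),
\end{equation*}
using \eqref{oc.fir} to eliminate the linear terms. The first quantity is nonnegative by \eqref{oc.sec}, and the second is nonnegative because $\lambda_t \geqslant 0$ and either $\lambda_t = 0$ or $\|s_t\| = \Delta_t \geqslant \|s\|$ by \eqref{oc.com}--\eqref{oc.primal}. This shows $s_t$ is a global minimizer.

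The main obstacle is really just the semidefiniteness condition in the necessity direction, but since the subproblem \eqref{tr} differs from the textbook form only by the constant shift $\sigma\|g_t\|^{1/2}I$ in the Hessian and the specific choice of $\Delta_t$, neither of which affects the structural argument, the result follows immediately from the cited theorem. I would therefore present the lemma as a direct specialization rather than redo the entire classical proof.
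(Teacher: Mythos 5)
Your proposal is correct and matches the paper's treatment: the paper likewise observes that \eqref{tr} is a standard trust-region subproblem with effective Hessian $H_t+\sigma\|g_t\|^{1/2}I$ and radius $r\max\{\|g_t\|^{1/2},\epsilon^{1/2}\}$, and obtains the four conditions directly from Theorem~4.1 of \cite{Jorge1999NumericalO} without reproving it. Your additional sketch of the classical necessity/sufficiency argument (including the Lagrangian identity for sufficiency) is accurate but goes beyond what the paper records.
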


In the remainder of this paper, we use $(s_t, \lambda_t)$ to denote the primal-dual solution pair of a subproblem at iteration $t$. The following lemma shows that by running a sufficient number of gradient ascent steps, we can obtain sufficiently accurate estimators of the gradient and Hessian. 
\begin{lem}\label{gradientascent}
	Suppose that Assumption \ref{ass} holds. For any given $\epsilon_1 > 0$ and $\epsilon_2 > 0$, set
	\begin{equation}\label{N_t}
		\eta_y= \frac{1}{\ell},~\theta=\frac{\sqrt{\kappa}-1}{\sqrt{\kappa}+1}, ~\begin{cases}
			N_0 \geqslant 2\sqrt{\kappa}\ln \frac{\sqrt{\kappa+1}\|y_{-1}-y^*(x_0)\|}{A}, \\
			N_t \geqslant 2\sqrt{\kappa} \ln\frac{\sqrt{\kappa+1}(A + \kappa\|s_{t-1}\|) }{A}, t \geqslant 1,
		\end{cases}
	\end{equation}
	where $A = \min\{\epsilon_1/\ell, \epsilon_2/(2L_H)\}$, then $\|\nabla P(x_t)-g_t\| \leqslant \epsilon_1 $ and $\|\nabla^2 P(x_t)-H_t\| \leqslant \epsilon_2$.
\end{lem}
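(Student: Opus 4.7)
The plan is to combine the classical linear convergence rate of gradient ascent on a smooth strongly concave function with the Lipschitz stability of the inner maximizer $y^*(x)$, and then translate a bound on $\|y_t-y^*(x_t)\|$ into bounds on the gradient and Hessian approximation errors. The target accuracy for $\|y_t-y^*(x_t)\|$ is exactly the quantity $A=\min\{\epsilon_1/\ell,\epsilon_2/L_H\}$ appearing in \eqref{N_t}.

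First I would recall that $\nabla_y f(x_t,\cdot)$ is $\mu$-strongly concave and $\ell$-smooth in $y$, so with step size $\eta_y=2/(\ell+\mu)$ the gradient ascent iteration $y^t_{k+1}=y^t_k+\eta_y\nabla_y f(x_t,y^t_k)$ contracts the distance to $y^*(x_t)$ by a factor $(\kappa-1)/(\kappa+1)\leqslant e^{-1/\kappa}$. Iterating this contraction $N_t$ times yields
\begin{equation*}
\|y_t-y^*(x_t)\|\ \leqslant\ \Bigl(\tfrac{\kappa-1}{\kappa+1}\Bigr)^{N_t}\,\|y^t_0-y^*(x_t)\|\ \leqslant\ e^{-N_t/\kappa}\,\|y^t_0-y^*(x_t)\|.
\end{equation*}
Hence it suffices to choose $N_t\geqslant\kappa\log(\|y^t_0-y^*(x_t)\|/A)$ to guarantee $\|y_t-y^*(x_t)\|\leqslant A$.

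Next I would bound the initial distance $\|y^t_0-y^*(x_t)\|$. For $t=1$ we have $y^1_0=y_{-1}$ and $x_1=x_0$, so the bound is just $\|y_{-1}-y^*(x_0)\|$, matching the first case of \eqref{N_t}. For $t\geqslant 2$ we have $y^t_0=y_{t-1}$ and, assuming by induction that $\|y_{t-1}-y^*(x_{t-1})\|\leqslant A$, the triangle inequality together with the $\kappa$-Lipschitz continuity of $y^*(\cdot)$ from Lemma \ref{lem1} gives
\begin{equation*}
\|y_{t-1}-y^*(x_t)\|\ \leqslant\ \|y_{t-1}-y^*(x_{t-1})\|+\|y^*(x_{t-1})-y^*(x_t)\|\ \leqslant\ A+\kappa\|s_{t-1}\|,
\end{equation*}
which explains the second case of \eqref{N_t}. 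An inductive argument over $t$ then ensures $\|y_t-y^*(x_t)\|\leqslant A$ for every iteration.

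Finally I would convert this distance bound into the two desired inequalities. Since $\nabla P(x_t)=\nabla_x f(x_t,y^*(x_t))$ by Lemma \ref{lem1} and $\nabla_x f$ is $\ell$-Lipschitz in $y$ by Assumption \ref{ass}(i),
\begin{equation*}
\|\nabla P(x_t)-g_t\|=\|\nabla_x f(x_t,y^*(x_t))-\nabla_x f(x_t,y_t)\|\leqslant \ell\,\|y^*(x_t)-y_t\|\leqslant \ell A\leqslant \epsilon_1.
\end{equation*}
Likewise, by Lemma \ref{hessian of P} the map $y\mapsto H(x_t,y)$ is $L_H$-Lipschitz and $\nabla^2 P(x_t)=H(x_t,y^*(x_t))$, so
\begin{equation*}
\|\nabla^2 P(x_t)-H_t\|\leqslant L_H\,\|y^*(x_t)-y_t\|\leqslant L_H A\leqslant \epsilon_2.
\end{equation*}
The only mildly delicate point is the inductive coupling between iterations $t-1$ and $t$: we need the bound $\|y_{t-1}-y^*(x_{t-1})\|\leqslant A$ to be in force before we can bootstrap the bound at iteration $t$, which is why the choice of $N_1$ must be handled separately from $N_t$ with $t\geqslant 2$. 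Once this induction is set up, the remaining calculations are routine.
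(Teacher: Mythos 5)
Your proof is correct, and it reconstructs exactly the standard argument behind this result: the paper itself gives no proof here, importing the lemma from \cite{Yao2024TwoTR}, and that reference's proof follows the same route (contraction factor $(\kappa-1)/(\kappa+1)\leqslant e^{-1/\kappa}$ for gradient ascent with step $2/(\ell+\mu)$, an induction that keeps $\|y_{t-1}-y^{*}(x_{t-1})\|\leqslant A$ so that the warm start satisfies $\|y_{t-1}-y^{*}(x_t)\|\leqslant A+\kappa\|s_{t-1}\|$ via the $\kappa$-Lipschitzness of $y^{*}$, and the final conversion through the $\ell$- and $L_H$-Lipschitz constants). No gaps.
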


\begin{proof}
	We first use induction to show that 
	\begin{align}\label{gra:1}
		\|y_t-y^{*}(x_t)\| \leqslant A
	\end{align}
    for all $t \geqslant 0$. Note that $y_t$ is obtained by applying $N_t$ Nesterov's accelerated gradient ascent steps starting from $y_{t-1}$. By Assumption \ref{ass} and (2.2.23) in \cite{Nesterov2018Lectures}, we have
    \begin{align}\label{gra:2}
    		\|y_t-y^*(x_t)\| \leqslant&~ (\sqrt{\kappa +1})e^{-N_t/(2\sqrt{\kappa})}\|y_{t-1}-y^*(x_{t})\|.
    \end{align}
    By further combining with \eqref{N_t}, \eqref{gra:1} holds directly for $t = 0$. 
   Suppose that \eqref{gra:1} holds for $t$, by \eqref{gra:2}, Lemma \ref{lem1} and \eqref{N_t}, we then obtain that
    \begin{align*}
    	\|y_{t+1}-y^*(x_{t+1})\| \leqslant&~ (\sqrt{\kappa +1})e^{-N_{t+1}/(2\sqrt{\kappa})}\|y_{t}-y^*(x_{t+1})\| \\
    	\leqslant &~ (\sqrt{\kappa +1})e^{-N_{t+1}/(2\sqrt{\kappa})}(\|y_{t}-y^*(x_{t})\| + \|y^*(x_{t+1})-y^*(x_{t})\|)\\
    	\leqslant &~ (\sqrt{\kappa +1})e^{-N_{t+1}/(2\sqrt{\kappa})}(A + \kappa\|x_{t+1}-x_{t}\|) \leqslant A,
    \end{align*}
    which completes the proof of \eqref{gra:1}. Then, 
    by Assumption \ref{ass}, Lemma \ref{lem1}, Lemma \ref{hessian of P} and \eqref{gra:1}, we get
    \begin{align*}
    	\|\nabla P(x_t) - g_t\| &= \|\nabla_x f(x_t,y^*(x_t)) - \nabla_x f(x_t,y_t)\| \leqslant l \|y_t - y^*(x_t)\| \leqslant \epsilon_1, \\
    	\|\nabla^2 P(x_t) - H_t\| &= \|H(x_t,y^*(x_t)) - H(x_t,y_t)\| \leqslant L_H \|y_t - y^*(x_t)\| \leqslant \epsilon_2.
    \end{align*}
    The proof is then completed.
\end{proof}

From the optimality condition \eqref{oc.primal}-\eqref{oc.sec} and Lemma \ref{gradientascent}, we can prove the conclusions in the following lemma and give the key inequalities about the function and gradient in two different cases $\|g_t\| \geqslant \epsilon $ and $\|g_t\| \leqslant \epsilon$.

\begin{lem}\label{lem5}
	Suppose that Assumption \ref{ass} holds. Let $\{x_t\}$ be a sequence generated by Algorithm \ref{alg:1}.
	Set 
	\begin{equation}\label{lem5:1}
		\sigma =\frac{\sqrt{L_2}}{2}, ~r=\frac{1}{4\sqrt{L_2}}, ~\epsilon_1 = \min\left\{ \frac{1}{96},\frac{\sqrt{L_2}}{16L_1}\right\} \epsilon^{3/2}, ~\epsilon_2 =\frac{\sqrt{L_2}}{12}\epsilon^{1/2},	
	\end{equation}
	and choose $\eta_y, \theta, N_t$ in \eqref{N_t}. When $\|g_t\|\geqslant \epsilon$, if   $\lambda_t>0$, we have
	\begin{equation}\label{lem5:2}
		P(x_{t+1}) \leqslant P(x_t)  -\frac{1}{128\sqrt{L_2}}\|g_t\|^{3/2}, ~\|g_{t+1}\| \leqslant 2 + \frac{L_1}{4\sqrt{L_2}}\|g_t\|^{1/2} + \|g_t\|;\\
	\end{equation}
	Otherwise if $\lambda_t=0$, we have
	\begin{equation}\label{lem5:3}
		P(x_{t+1}) \leqslant P(x_t), ~\|g_{t+1}\| \leqslant \frac{1}{3}\|g_t\|.
	\end{equation}
\end{lem}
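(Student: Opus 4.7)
The plan is to start from the cubic upper bound that the $L_2$-Lipschitz Hessian of $P$ gives (Lemma \ref{hessian of P}), add and subtract the model $m_t(s_t) := g_t^\top s_t + \frac{1}{2} s_t^\top(H_t + \sigma\|g_t\|^{1/2}I)s_t$, and absorb the differences between $(\nabla P(x_t), \nabla^2 P(x_t))$ and $(g_t, H_t)$ via the error bounds $\epsilon_1, \epsilon_2$ supplied by Lemma \ref{gradientascent}. This produces
\begin{equation*}
P(x_{t+1}) - P(x_t) \leqslant m_t(s_t) + \epsilon_1\|s_t\| + \frac{\epsilon_2}{2}\|s_t\|^2 - \frac{\sigma}{2}\|g_t\|^{1/2}\|s_t\|^2 + \frac{L_2}{6}\|s_t\|^3.
\end{equation*}
Since $\|g_t\| \geqslant \epsilon$ the trust-region radius in \eqref{tr} is $r\|g_t\|^{1/2}$; combining $\|s_t\| \leqslant r\|g_t\|^{1/2}$, $\epsilon^{1/2} \leqslant \|g_t\|^{1/2}$, and the parameter choices in \eqref{lem5:1} turns the last three terms into $-\frac{\sqrt{L_2}}{6}\|g_t\|^{1/2}\|s_t\|^2$, so the working inequality is $P(x_{t+1}) - P(x_t) \leqslant m_t(s_t) + \epsilon_1\|s_t\| - \frac{\sqrt{L_2}}{6}\|g_t\|^{1/2}\|s_t\|^2$.

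When $\lambda_t > 0$, complementary slackness \eqref{oc.com} forces $\|s_t\| = r\|g_t\|^{1/2} = \frac{1}{4\sqrt{L_2}}\|g_t\|^{1/2}$, while \eqref{oc.fir}--\eqref{oc.sec} give $m_t(s_t) \leqslant -\frac{\lambda_t}{2}\|s_t\|^2 \leqslant 0$. Substituting $\|s_t\|^2 = \frac{1}{16L_2}\|g_t\|$ produces $-\frac{\sqrt{L_2}}{6}\|g_t\|^{1/2}\|s_t\|^2 = -\frac{1}{96\sqrt{L_2}}\|g_t\|^{3/2}$, and $\epsilon_1\|s_t\| \leqslant \frac{1}{384\sqrt{L_2}}\|g_t\|^{3/2}$ using $\epsilon_1 \leqslant \frac{1}{96}\epsilon^{3/2}$ together with $\epsilon^{3/2} \leqslant \|g_t\|$ (valid because $\epsilon \leqslant \|g_t\|$ and we may assume $\epsilon \leqslant 1$); netting these yields the stated $\frac{1}{128\sqrt{L_2}}\|g_t\|^{3/2}$ descent. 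For the accompanying gradient bound, two uses of Lemma \ref{gradientascent} together with the $L_1$-Lipschitzness of $\nabla P$ (Lemma \ref{lem1}) give $\|g_{t+1}\| \leqslant 2\epsilon_1 + \|g_t\| + L_1 r\|g_t\|^{1/2}$, after which the deliberately crude bound $2\epsilon_1 \leqslant 2$ recovers \eqref{lem5:2}.

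When $\lambda_t = 0$, \eqref{oc.fir} becomes $M_t s_t = -g_t$ with $M_t := H_t + \sigma\|g_t\|^{1/2}I \succeq 0$, so $m_t(s_t) = -\frac{1}{2} s_t^\top M_t s_t \leqslant 0$ and $\|s_t\| \geqslant \|g_t\|/\|M_t\|$. The Schur-complement form of $H$ under Assumption \ref{ass} yields $\|H_t\| \leqslant L_1$ and hence $\|M_t\| \leqslant L_1 + \sigma\|g_t\|^{1/2}$. I split on which summand dominates: when $\sigma\|g_t\|^{1/2} \leqslant L_1$ one has $\|g_t\|^{1/2}\|s_t\| \geqslant \|g_t\|^{3/2}/(2L_1)$, and the choice $\epsilon_1 \leqslant \frac{\sqrt{L_2}}{16 L_1}\epsilon^{3/2}$ forces $\epsilon_1\|s_t\| \leqslant \frac{\sqrt{L_2}}{6}\|g_t\|^{1/2}\|s_t\|^2$; otherwise $\|g_t\|^{1/2}\|s_t\| \geqslant \|g_t\|/(2\sigma)$ and $\epsilon_1 \leqslant \frac{1}{96}\|g_t\|$ yields the same inequality. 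Either way the negative quadratic-in-$s_t$ term dominates $\epsilon_1\|s_t\|$, giving $P(x_{t+1}) \leqslant P(x_t)$. For the gradient reduction, a second-order Taylor expansion of $\nabla P$ together with the exact cancellation $H_t s_t = -g_t - \sigma\|g_t\|^{1/2}s_t$ leaves $\nabla P(x_{t+1}) = -\sigma\|g_t\|^{1/2}s_t$ plus three residuals bounded by $\frac{L_2}{2}\|s_t\|^2$, $\epsilon_2\|s_t\|$, and $\epsilon_1$; estimating each under $\|s_t\| \leqslant r\|g_t\|^{1/2}$ gives $\|\nabla P(x_{t+1})\| \leqslant \frac{3}{16}\|g_t\|$, and one more application of Lemma \ref{gradientascent} concludes $\|g_{t+1}\| \leqslant \frac{1}{3}\|g_t\|$.

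The main obstacle is the subcase split in the $\lambda_t = 0$ branch: because neither $L_1$ nor $\sigma\|g_t\|^{1/2}$ dominates $\|M_t\|$ uniformly in $\|g_t\|$, the allowable $\epsilon_1$ must be taken as the minimum of two different quantities, and the rest of the proof is careful constant accounting to hit the precise $\frac{1}{128\sqrt{L_2}}$ and $\frac{1}{3}$ thresholds.
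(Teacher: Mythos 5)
Your proposal is correct and follows essentially the same route as the paper's proof: the same cubic upper bound plus error-absorption decomposition, the same use of the KKT conditions \eqref{oc.com}--\eqref{oc.sec} to handle the two dual cases, and the same constant accounting for the $\frac{1}{128\sqrt{L_2}}$ descent and the $\frac13$ contraction. The only cosmetic differences are that you make the implicit assumption $\epsilon\leqslant 1$ explicit and replace the paper's combined bound $\|g_t\|\leqslant(L_1+\sigma\|g_t\|^{1/2})\|s_t\|$ with an equivalent two-subcase dichotomy, which is if anything slightly cleaner.
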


\begin{proof}
   By Lemma \ref{hessian of P} and  the Cauchy-Swartz inequality, we obtain
   	\begin{align}\label{lem5:4}
 		&~P(x_{t+1}) - P(x_t) \nonumber\\
   		\leqslant &~\nabla P(x_t)^\top s_t + \frac{1}{2} s_t^\top \nabla^2 P(x_t) s_t + \frac{L_2}{6} \|s_t\|^3 \nonumber\\
   		=&~g_t^\top s_t + \frac{1}{2} s_t^\top H_t s_t + (\nabla P(x_t) - g_t)^\top s_t + \frac{1}{2} s_t^\top (\nabla^2 P(x_t) - H_t) s_t + \frac{L_2}{6} \|s_t\|^3 \nonumber\\
   		\leqslant&~ g_t^\top s_t + \frac{1}{2} s_t^\top H_t s_t + \|\nabla P(x_t) - g_t\| \|s_t\| + \frac{1}{2} \|\nabla^2 P(x_t) - H_t\| \|s_t\|^2 + \frac{L_2}{6} \|s_t\|^3 \nonumber\\
   		\leqslant&~ g_t^\top s_t + \frac{1}{2} s_t^\top H_t s_t + \epsilon_1\|s_t\| + \frac{\epsilon_2}{2} \|s_t\|^2 + \frac{L_2}{6} \|s_t\|^3,
   	 \end{align}
   where the last inequality is by Lemma \ref{gradientascent}.
   By \eqref{oc.fir} and \eqref{oc.sec}, it is easy to verify that
   \begin{align}\label{lem5:5}
   	   g_t^\top s_t + \frac{1}{2} s_t^\top H_t s_t 
   	   =& -\frac{1}{2} s_t^{\top} (H_t + \sigma \|g_t\|^{1/2}I + \lambda_t I ) s_t - \frac{1}{2} s_t^{\top} ( \sigma \|g_t\|^{1/2}I + \lambda_t I ) s_t \nonumber \\
   	   \leqslant& - \frac{1}{2} s_t^{\top} ( \sigma \|g_t\|^{1/2}I + \lambda_t I ) s_t.
   \end{align}
   Combining \eqref{lem5:4} and \eqref{lem5:5}, we get
   \begin{align}\label{lem5:6}
    	P(x_{t+1}) 
    	\leqslant  P(x_t) - \frac{1}{2} s_t^{\top} ( \sigma \|g_t\|^{1/2}I + \lambda_t I ) s_t + \epsilon_1\|s_t\| + \frac{\epsilon_2}{2} \|s_t\|^2 + \frac{L_2}{6} \|s_t\|^3.
   \end{align} 
   For the case  $\|g_t\| \geqslant \epsilon$ and $\lambda_t > 0$, the complementary property  \eqref{oc.com} implies that
   \begin{equation}\label{lem5:7}
   	   \|s_t\|= r\|g_t\|^{1/2}.
   \end{equation}
    By plugging \eqref{lem5:7} and \eqref{lem5:1} into \eqref{lem5:6}, we then obtain
   \begin{align}\label{lem5:8}
   	  P(x_{t+1}) - P(x_t) 
   	  \leqslant & -\frac{1}{2}\sigma r^2\|g_t\|^{3/2} + \frac{1}{96}r\epsilon\|g_t\|^{1/2} + \frac{\sqrt{L_2}}{24}r^2\epsilon^{1/2} \|g_t\| +  \frac{L_2}{6}r^3\|g_t\|^{3/2} \nonumber \\
   	  \leqslant& -\frac{1}{128\sqrt{L_2}}\|g_t\|^{3/2},
   \end{align}
   which shows the first inequaity of \eqref{lem5:2}.
   By the Cauchy-Swartz inequality, Lemmas \ref{lem1} and \ref{gradientascent}, we get
   	\begin{align}\label{lem5:9}
   	\|g_{t+1}\| \leqslant&~ \|g_{t+1}- \nabla P(x_{t+1})\| + \| \nabla P(x_{t+1})- \nabla P(x_t)\|+ \|\nabla P(x_t)-g_t\| +\|g_t\| \nonumber\\
   	\leqslant&~ 2\epsilon_1 + L_1\|s_t\| + \|g_t\|.
   \end{align}
    Plugging \eqref{lem5:7} and \eqref{lem5:1} into \eqref{lem5:9}, the second inequaity of \eqref{lem5:2} is proved.

    Next, we consider the case $\|g_t\| \geqslant \epsilon$ and $\lambda_t = 0$. Combining \eqref{lem5:6} with $\|s_t\| \leqslant r\|g_t\|^{1/2}$ and \eqref{lem5:1}, we have 
    \begin{align}\label{lem5:10}
    	&~P(x_{t+1})-P(x_t) \nonumber \\
    	\leqslant & -\bigg(\frac{\sigma}{2}-\frac{L_2r}{6}\bigg) \|g_t\|^{1/2}\|s_t\|^2 + \min\bigg\{ \frac{1}{96},\frac{\sqrt{L_2}}{16L_1}\bigg\}\epsilon^{3/2}\|s_t\| + \frac{\sqrt{L_2}}{24}\epsilon^{1/2} \|s_t\|^2 \nonumber\\
    	\leqslant& -\left(\frac{\sqrt{L_2}}{6}-\min\bigg\{ \frac{1}{96},\frac{\sqrt{L_2}}{16L_1}\bigg\}\frac{\epsilon^{3/2}}{\|g_t\|^{1/2}\|s_t\|}\right)\|g_t\|^{1/2}\|s_t\|^2.
    \end{align}
     By \eqref{oc.fir} and $\|H_t\| \leqslant \ell(1+\kappa) = L_1$, we can easily get
    \begin{equation*}
   	    \|g_t\| \leqslant (L_1+\sigma\|g_t\|^{1/2})\|s_t\|,
    \end{equation*}
  which implies that 
   \begin{align}\label{lem5:11}
   	&~ \epsilon^{3/2}\leqslant \epsilon^{1/2}\|g_t\| \leqslant (L_1+\sigma)\|g_t\|^{1/2}\|s_t\|. 
   \end{align}
   Then, the first inequaity of \eqref{lem5:3} holds by combining \eqref{lem5:10} with \eqref{lem5:11}.
   By the Cauchy-Swartz inequality, Lemmas \ref{hessian of P} and \ref{gradientascent}, \eqref{oc.fir} and $\|s_t\| \leqslant r\|g_t\|^{1/2}$, we have 
   \begin{align}\label{lem5:12}
     	\|g_{t+1}\| 
	    \leqslant&~ \|\nabla P(x_{t+1})-g_{t+1}\| + \|\nabla P(x_{t+1}) - \nabla P(x_t) - \nabla^2 P(x_t) s_t\| \nonumber\\
     	&~ + \|\nabla P(x_t)-g_t\| + \|\nabla^2 P(x_t)-H_t\|\|s_t\| + \|g_t + H_ts_t\| \nonumber\\
	    \leqslant &~ 2\epsilon_1 + \frac{L_2}{2}\|s_t\|^2 + \epsilon_2\|s_t\| + \sigma \|g_t\|^{1/2}\|s_t\| \nonumber \\
	    \leqslant &~ \left(2\frac{\epsilon_1}{\|g_t\|} + \frac{L_2}{2}r^2 +\frac{\epsilon_2}{\|g_t\|^{1/2}}r + \sigma r\right)\|g_t\|.
    \end{align}
    The second inequaity of \eqref{lem5:3} follows by plugging \eqref{lem5:1} into \eqref{lem5:12}.
\end{proof}

\begin{lem}\label{lem6}
	Suppose that Assumption \ref{ass} holds. Let $\{x_t\}$ be a sequence generated by Algorithm \ref{alg:1} with parameter settings in \eqref{lem5:1}.
	Then for the case $\|g_t\|\leqslant \epsilon$, if  $\lambda_t \leqslant \sqrt{L_2\epsilon} $, we have already found an $\mathcal{O}(\epsilon,\sqrt{\epsilon})$-second-order stationary point $x_t$; if  $\lambda_t \geqslant \sqrt{L_2\epsilon} $, we have 
	\begin{align}\label{lem6:2}
		P(x_{t+1})-P(x_t) 
		\leqslant  - \frac{1}{128}\sqrt{L_2}\epsilon^{3/2}, ~\|g_{t+1}\| \leqslant 2 +\frac{L_1}{4\sqrt{L_2}}.
	\end{align}	
\end{lem}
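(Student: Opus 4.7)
The plan is to handle the two sub-cases ($\lambda_t\leqslant\sqrt{L_2\epsilon}$ and $\lambda_t\geqslant\sqrt{L_2\epsilon}$) separately, in both cases reusing the descent machinery from the proof of Lemma \ref{lem5} but specialized to the small-gradient regime $\|g_t\|\leqslant\epsilon$, in which the trust-region radius collapses to the constant $r\epsilon^{1/2}$.

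In the first sub-case I would directly verify both conditions of Definition \ref{approximate SSP}. The first-order bound is immediate from the triangle inequality together with Lemma \ref{gradientascent}:
\[
\|\nabla P(x_t)\|\leqslant\|\nabla P(x_t)-g_t\|+\|g_t\|\leqslant\epsilon_1+\epsilon=\mathcal{O}(\epsilon).
\]
For the Hessian condition I would rewrite the dual feasibility \eqref{oc.sec} as $H_t\succeq-(\sigma\|g_t\|^{1/2}+\lambda_t)I$, combine with the Hessian estimate $\|\nabla^2 P(x_t)-H_t\|\leqslant\epsilon_2$ from Lemma \ref{gradientascent}, and substitute $\|g_t\|\leqslant\epsilon$, $\lambda_t\leqslant\sqrt{L_2\epsilon}$ together with the parameter values $\sigma=\sqrt{L_2}/2$ and $\epsilon_2=\sqrt{L_2}\epsilon^{1/2}/12$ from \eqref{lem5:1}. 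All three surviving terms scale as $\sqrt{L_2}\sqrt{\epsilon}$, so $\nabla^2 P(x_t)\succeq-\Theta(\sqrt{L_2})\sqrt{\epsilon}\,I$ as required.

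In the second sub-case I would first observe that $\lambda_t\geqslant\sqrt{L_2\epsilon}>0$, so complementary slackness \eqref{oc.com} combined with $\|g_t\|\leqslant\epsilon$ forces the step to saturate the trust-region boundary: $\|s_t\|=r\epsilon^{1/2}$. I would then reuse the generic descent inequality \eqref{lem5:6} derived in the proof of Lemma \ref{lem5}, but retain the $\lambda_t$ contribution rather than absorbing it:
\[
P(x_{t+1})-P(x_t)\leqslant-\tfrac{\lambda_t}{2}\|s_t\|^2+\epsilon_1\|s_t\|+\tfrac{\epsilon_2}{2}\|s_t\|^2+\tfrac{L_2}{6}\|s_t\|^3.
\]
Inserting $\|s_t\|=r\epsilon^{1/2}$, the lower bound on $\lambda_t$, and the parameters \eqref{lem5:1} converts each of the four terms into an explicit multiple of $\epsilon^{3/2}$ (the $\epsilon_1\|s_t\|$ piece is actually $\mathcal{O}(\epsilon^2)\leqslant\mathcal{O}(\epsilon^{3/2})$ for $\epsilon\leqslant 1$); direct arithmetic then yields the stated descent $-\tfrac{1}{128}\sqrt{L_2}\,\epsilon^{3/2}$. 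The gradient bound follows from the same triangle-inequality decomposition as in \eqref{lem5:9}, $\|g_{t+1}\|\leqslant 2\epsilon_1+L_1\|s_t\|+\|g_t\|$, after substituting $\|s_t\|=r\epsilon^{1/2}=\epsilon^{1/2}/(4\sqrt{L_2})$ and $\|g_t\|\leqslant\epsilon$; under the implicit $\epsilon\leqslant 1$ the $2\epsilon_1+\|g_t\|$ tail is absorbed into the absolute constant $2$, leaving the $L_1/(4\sqrt{L_2})$ contribution from $L_1\|s_t\|$.

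The main obstacle is the constant bookkeeping in the second sub-case: because the step length is pinned at its minimum value $r\epsilon^{1/2}$, one cannot exploit a large $\|g_t\|$ to dominate the inexact-gradient term $\epsilon_1\|s_t\|$, the inexact-Hessian term $\tfrac{\epsilon_2}{2}\|s_t\|^2$, and the cubic Taylor remainder $\tfrac{L_2}{6}\|s_t\|^3$, as was done in Lemma \ref{lem5}; all three become genuinely comparable to the $\lambda_t$-driven descent. The particular choices $\sigma=\sqrt{L_2}/2$, $r=1/(4\sqrt{L_2})$, $\epsilon_1\leqslant\epsilon^{3/2}/96$, and $\epsilon_2=\sqrt{L_2}\epsilon^{1/2}/12$ in \eqref{lem5:1} are calibrated precisely so that the aggregate positive coefficient is strictly smaller than the negative one, which is what yields a clean $\Theta(\epsilon^{3/2})$ decrement.
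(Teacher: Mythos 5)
Your proposal is correct and follows essentially the same route as the paper: the first sub-case is handled by the triangle inequality with Lemma \ref{gradientascent} for the gradient and by the dual feasibility condition \eqref{oc.sec} plus the Hessian estimate for the curvature bound, while the second sub-case retains the $-\tfrac{\lambda_t}{2}\|s_t\|^2$ term in \eqref{lem5:6}, uses complementary slackness to pin $\|s_t\|=r\epsilon^{1/2}$, and repeats the arithmetic of \eqref{lem5:8}--\eqref{lem5:9} with $\|g_t\|$ replaced by $\epsilon$. No gaps.
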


\begin{proof}
	We first consider the case $\|g_t\|\leqslant \epsilon$ and $\lambda_t \leqslant \sqrt{L_2\epsilon}$. 
	By $\|g_t\| \leqslant \epsilon$, Lemma \ref{gradientascent} and \eqref{lem5:1}, we have
	\begin{align}\label{lem6:3}
		\|\nabla P(x_t)\| \leqslant \|\nabla P(x_t)-g_t\| + \|g_t\| \leqslant \epsilon_1 + \|g_t\| \leqslant \frac{97}{96}\epsilon.
	\end{align}
    By \eqref{oc.sec}, Lemma \ref{gradientascent} and \eqref{lem5:1}, we have
    \begin{align}\label{lem6:4}
    	\nabla^2 P(x_t) 
    	\succeq H_t - \|\nabla^2 P(x_t)-H_t\|I 
    	\succeq -\lambda_t I -\sigma\|g_t\|^{1/2} I -\epsilon_2 I 
    	\succeq -\frac{19}{12}\sqrt{L_2\epsilon}.
    \end{align}
     Then, \eqref{lem6:3} and \eqref{lem6:4} shows that $x_t$ is  an $\mathcal{O}(\epsilon,\sqrt{\epsilon})$-second-order stationary point. 
     
     For the case $\|g_t\|\leqslant \epsilon$ and $\lambda_t \geqslant \sqrt{L_2\epsilon}$, by \eqref{lem5:6}, it follows that
     \begin{align*}
     	P(x_{t+1})-P(x_t)
     	\leqslant& -\frac{1}{2}\lambda_t \|s_t\|^2 + \epsilon_1\|s_t\| + \frac{\epsilon_2}{2} \|s_t\|^2 + \frac{L_2}{6} \|s_t\|^3 \\
     	\leqslant& -\frac{1}{4}\sqrt{L_2\epsilon}\|s_t\|^2 + \epsilon_1\|s_t\| + \frac{\epsilon_2}{2} \|s_t\|^2 + \frac{L_2}{6} \|s_t\|^3.
     \end{align*}
     The rest of the proof is almost the same as in \eqref{lem5:8} and \eqref{lem5:9} by replacing $\|g_t\|$ with $\epsilon$. We omit the details here for simplicity.
\end{proof}

We are now ready to establish the iteration complexity for the GRTR algorithm.
In particular, let $\epsilon > 0$ be any given target accuracy. We denote the first iteration index to achieve an  $\mathcal{O}(\epsilon,\sqrt{\epsilon})$-second-order stationary point by 
\begin{align}\label{T}
	{T}(\epsilon) := \min\{t |\|\nabla P(x_t)\| \leqslant \xi\epsilon ~\text{and}~ \nabla^2 P(x_t) \succeq -\theta\sqrt{\epsilon} I\},
\end{align}
where $\xi$ and $\theta$ are some constants independent of $\epsilon$.
We further denote the first iteration index of the GRTR algorithm to  achieve $\|g_t\| \leqslant \epsilon$ and $\lambda_t \leqslant \sqrt{L_2\epsilon}$ by
\begin{align*}
	\tilde{T}(\epsilon) := \min\{t |\|g_t\| \leqslant \epsilon ~\text{and}~ \lambda_t \leqslant \sqrt{L_2\epsilon}\}.
\end{align*}
From Lemma \ref{lem6},  it can be known that  $x_{\tilde{T}(\epsilon)}$ is an $\mathcal{O}(\epsilon,\sqrt{\epsilon})$-second-order stationary point, indicating that $T(\epsilon) \leqslant \tilde{T}(\epsilon)$.

Let us define the following index sets to facilitate the complexity analysis:
\begin{align}
	& \mathcal{F}_{\tilde{T}(\epsilon)} =\bigg\{t < \tilde{T}(\epsilon): P(x_{t+1})-P(x_t)\leqslant -\frac{1}{128\sqrt{L_2}} \max\{\|g_t\|^{3/2}, \epsilon^{3/2}\}\bigg\}, \label{set1} \\
	& \mathcal{G}_{\tilde{T}(\epsilon)} = \left\{t < \tilde{T}(\epsilon): \|g_{t+1}\|\leqslant \frac{1}{3}\|g_t\|, t \notin \mathcal{F}_{\tilde{T}(\epsilon)}  \right\}. \label{set2}
\end{align}
According to Lemmas \ref{lem5} and \ref{lem6}, each iteration $t$ belongs to  at least one of the two sets $\mathcal{F}_{\tilde{T}(\epsilon)}$ and $\mathcal{G}_{\tilde{T}(\epsilon)}$. Therefore, our aim is to establish an upper bound for the cardinality of sets $\mathcal{F}_{\tilde{T}(\epsilon)}$ and
$\mathcal{G}_{\tilde{T}(\epsilon)}$.

 In the following lemma, we analyze $\mathcal{F}_{\tilde{T}(\epsilon)}$ by evaluating the decrease in function value.

\begin{lem}\label{lem7}
	Suppose that Assumption \ref{ass} holds. Let $\{x_t\}$ be a sequence generated by Algorithm \ref{alg:1}  with parameter settings in \eqref{lem5:1}. We have 
	\begin{align}\label{lem7:2}
		|\mathcal{F}_{\tilde{T}(\epsilon)}| \leqslant 128\sqrt{L_2}(P(x_0)-P^*) \epsilon^{-3/2}.
	\end{align}
\end{lem}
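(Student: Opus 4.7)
The plan is a standard telescoping argument on the function value $P$, exploiting the per-iteration decrease guaranteed on $\mathcal{F}_{\tilde{T}(\epsilon)}$ together with non-increase of $P$ on the complementary set.

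First, I would observe that by the definition of $\mathcal{F}_{\tilde{T}(\epsilon)}$ in \eqref{set1}, every index $t \in \mathcal{F}_{\tilde{T}(\epsilon)}$ satisfies
\[
P(x_t) - P(x_{t+1}) \;\geqslant\; \frac{1}{128\sqrt{L_2}}\,\max\{\|g_t\|^{3/2},\epsilon^{3/2}\}\;\geqslant\;\frac{1}{128\sqrt{L_2}}\,\epsilon^{3/2},
\]
simply because the maximum dominates $\epsilon^{3/2}$. For indices $t < \tilde{T}(\epsilon)$ not in $\mathcal{F}_{\tilde{T}(\epsilon)}$, I would invoke Lemmas \ref{lem5} and \ref{lem6}: in each of the cases they handle (namely $\|g_t\|\geqslant\epsilon$ with $\lambda_t=0$, and $\|g_t\|\leqslant\epsilon$ with $\lambda_t\geqslant\sqrt{L_2\epsilon}$ after one accounts for the membership in $\mathcal{F}_{\tilde{T}(\epsilon)}$ of the decreasing cases), the function value is non-increasing, i.e.\ $P(x_{t+1})\leqslant P(x_t)$.

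Next, I would telescope $P(x_{t+1})-P(x_t)$ over $t=0,1,\dots,\tilde{T}(\epsilon)-1$. Discarding the non-positive terms from indices outside $\mathcal{F}_{\tilde{T}(\epsilon)}$ and using the uniform per-iteration decrease on $\mathcal{F}_{\tilde{T}(\epsilon)}$, I obtain
\[
P(x_{\tilde{T}(\epsilon)}) - P(x_0) \;\leqslant\; -\sum_{t\in\mathcal{F}_{\tilde{T}(\epsilon)}} \frac{1}{128\sqrt{L_2}}\,\epsilon^{3/2} \;=\; -\frac{|\mathcal{F}_{\tilde{T}(\epsilon)}|}{128\sqrt{L_2}}\,\epsilon^{3/2}.
\]
Combining with the lower bound $P(x_{\tilde{T}(\epsilon)})\geqslant P^*$ from Assumption \ref{ass}(iii) and rearranging yields the claimed bound
\[
|\mathcal{F}_{\tilde{T}(\epsilon)}| \;\leqslant\; 128\sqrt{L_2}\,(P(x_0)-P^*)\,\epsilon^{-3/2}.
\]

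There is no real obstacle here; the only point that needs a bit of care is to verify that the indices $t<\tilde{T}(\epsilon)$ not counted in $\mathcal{F}_{\tilde{T}(\epsilon)}$ still contribute a non-positive term to the telescoping sum, which is immediate from the first inequalities of \eqref{lem5:3} and from the portion of \eqref{lem5:2}/\eqref{lem6:2} that already qualifies those iterations for $\mathcal{F}_{\tilde{T}(\epsilon)}$. Once this dichotomy is in place, the estimate is purely arithmetic.
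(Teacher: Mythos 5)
Your proposal is correct and follows essentially the same route as the paper: the per-iteration decrease of at least $\frac{1}{128\sqrt{L_2}}\epsilon^{3/2}$ on $\mathcal{F}_{\tilde{T}(\epsilon)}$ comes directly from the definition of that set, the monotonicity $P(x_{t+1})\leqslant P(x_t)$ on all other iterations comes from Lemmas \ref{lem5} and \ref{lem6}, and the bound follows by telescoping against $P(x_0)-P^*$. No gaps.
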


\begin{proof}
	According to Lemmas \ref{lem5} and \ref{lem6}, for any $t \geqslant 0$, we have
	\begin{align}\label{lem7:3}
		P(x_{t+1}) \leqslant P(x_t).
	\end{align}
    In addition, for any $t \in \mathcal{F}_{\tilde{T}(\epsilon)}$, we can easily obtain that
    \begin{align}\label{lem7:4}
    	 \frac{1}{128\sqrt{L_2}}\epsilon^{3/2} \leqslant \frac{1}{128\sqrt{L_2}}\max\{\|g_t\|^{3/2}, \epsilon^{3/2}\} \leqslant P(x_t)-P(x_{t+1}).
    \end{align}
     From \eqref{lem7:3}, \eqref{lem7:4} and Assumption \ref{ass}, it follows that
    \begin{align*}
    	|\mathcal{F}_{\tilde{T}(\epsilon)}|\frac{1}{128\sqrt{L_2}}\epsilon^{3/2} 
    	\leqslant& \sum_{t \in \mathcal{F}_{\tilde{T}(\epsilon)} } (P(x_t)-P(x_{t+1})) \\
    	\leqslant & \sum_{t=0}^{\tilde{T}(\epsilon)-1} (P(x_t)-P(x_{t+1})) 
    	\leqslant  P(x_0)-P^*,
    \end{align*}
    which completes the proof.
\end{proof}

Now, it remains to establish an upper bound on the index set  $|\mathcal{G}_{\tilde{T}(\epsilon)}|$.
\begin{lem}\label{lem8}
    Suppose that Assumption \ref{ass} holds. Let $\{x_t\}$ be a sequence generated by Algorithm \ref{alg:1}  with parameter settings in \eqref{lem5:1}. We have 
	\begin{equation}\label{lem8:1}
	   \vert \mathcal{G}_{\tilde{T}(\epsilon)} \vert \leqslant  \lceil\ln (G/\epsilon)/ \ln3\rceil \vert \mathcal{F}_{\tilde{T}(\epsilon)} \vert,
    \end{equation}
    where $G= \max\big\{2 + \frac{L_1}{4\sqrt{L_2}}C^{1/2} + C, 2 +\frac{L_1}{4\sqrt{L_2}}\big\}$ with  $C=\left (128\sqrt{L_2}\left (P(x_0)-P^*\right )\right)^{2/3}= \mathcal{O}(\rho^{1/3}\kappa)$, which further implies $G = \mathcal{O}(\rho^{1/3}\kappa+\ell\rho^{-1/3})$.
\end{lem}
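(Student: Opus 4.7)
The plan is to partition the iterations in $\mathcal{G}_{\tilde{T}(\epsilon)}$ into maximal consecutive runs, each bracketed by $\mathcal{F}_{\tilde{T}(\epsilon)}$-iterations, and to use the geometric contraction $\|g_{t+1}\|\le\|g_t\|/3$ supplied by \eqref{lem5:3} to bound the length of each run. Since the gradient norm must stay $\ge\epsilon$ on $\mathcal{G}_{\tilde{T}(\epsilon)}$ (otherwise by Lemma~\ref{lem6} the iterate would belong to $\mathcal{F}_{\tilde{T}(\epsilon)}$ or equal $\tilde{T}(\epsilon)$), a run of length $k$ starting from a gradient of norm $\le G$ must satisfy $\epsilon\le G/3^{k-1}$, giving $k\le\lceil\log(G/\epsilon)/\log 3\rceil$. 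It then remains to show $\|g_{t_0}\|\le G$ at the start of every $\mathcal{G}$-run, and to observe that there are at most $|\mathcal{F}_{\tilde{T}(\epsilon)}|$ such runs, since each is immediately preceded by an iteration in $\mathcal{F}_{\tilde{T}(\epsilon)}$.

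The uniform bound $\|g_{t_0}\|\le G$ is obtained in two steps. First, I claim $\|g_t\|\le C$ on $\mathcal{F}_{\tilde{T}(\epsilon)}$: each $t\in\mathcal{F}_{\tilde{T}(\epsilon)}$ contributes a decrease of at least $\frac{1}{128\sqrt{L_2}}\|g_t\|^{3/2}$ to $P$ by \eqref{lem5:2} or \eqref{lem6:2}, and every other iteration is non-increasing in $P$ by Lemmas~\ref{lem5}–\ref{lem6}, so telescoping against the total budget $P(x_0)-P^*$ forces $\|g_t\|^{3/2}\le 128\sqrt{L_2}(P(x_0)-P^*)=C^{3/2}$. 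Second, plugging $\|g_t\|\le C$ (or $\|g_t\|\le\epsilon\le C$) into the gradient-propagation inequalities in \eqref{lem5:2} and \eqref{lem6:2} yields $\|g_{t+1}\|\le G$ whenever $t\in\mathcal{F}_{\tilde{T}(\epsilon)}$, with $G$ exactly as defined in the statement. Pairing each $\mathcal{G}$-run with the $\mathcal{F}$-iteration immediately preceding it then delivers the desired count \eqref{lem8:1}.

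Finally, the asymptotic order of $G$ follows by a routine substitution: since $L_2=\rho(1+\kappa)^3$ we have $\sqrt{L_2}=\mathcal{O}(\rho^{1/2}\kappa^{3/2})$ and hence $C=\mathcal{O}(\rho^{1/3}\kappa)$, while $L_1=(\kappa+1)\ell$ makes the term $\frac{L_1}{4\sqrt{L_2}}C^{1/2}$ of order $\ell\rho^{-1/3}$, giving $G=\mathcal{O}(\rho^{1/3}\kappa+\ell\rho^{-1/3})$. The main obstacle is the bookkeeping in the first step: one needs to confirm that the $\|g_t\|\le\epsilon$ branch of Lemma~\ref{lem6} still provides the full $\epsilon^{3/2}$-size decrease so that the telescoping argument is lossless, and that every iterate outside $\mathcal{F}_{\tilde{T}(\epsilon)}$ (including every iterate in $\mathcal{G}_{\tilde{T}(\epsilon)}$) indeed satisfies $P(x_{t+1})\le P(x_t)$; both facts are already contained in \eqref{lem5:3} and the proof of Lemma~\ref{lem6}, after which the counting argument goes through as sketched.
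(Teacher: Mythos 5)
Your proposal is correct and follows essentially the same route as the paper's proof: establish $\|g_t\|\leqslant C$ on $\mathcal{F}_{\tilde{T}(\epsilon)}$ from the decrease condition and the budget $P(x_0)-P^*$, deduce the uniform bound $\|g_{t+1}\|\leqslant G$ from the gradient-propagation inequalities, and then bound the length of each maximal consecutive run in $\mathcal{G}_{\tilde{T}(\epsilon)}$ by $\lceil\log(G/\epsilon)/\log 3\rceil$ using the contraction factor $\tfrac{1}{3}$ and the fact that $\|g_t\|>\epsilon$ throughout such a run. The only cosmetic differences are that the paper obtains both the bound on $\|g_t\|$ and the run-length bound by contradiction rather than by your direct telescoping and counting arguments; the substance is identical.
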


\begin{proof}
	We first establish the uniform upper bound of the norm of  gradient. For any $t \in \mathcal{F}_{\tilde{T}(\epsilon)}$, we can conclude that 
    \begin{equation}\label{lem8:2}
    	\|g_t\| \leqslant C= \left (128\sqrt{L_2}\left (P(x_0)-P^*\right )\right)^{2/3}
    \end{equation}
    by contradiction. Suppose that \eqref{lem8:2} does not hold. Then by $t \in \mathcal{F}_{\tilde{T}(\epsilon)}$, \eqref{lem7:3} and \eqref{lem7:4}, we have
    \begin{equation*}
    	P(x_{t+1}) \leqslant P(x_t)-\frac{1}{128\sqrt{L_2}}\|g_t\|^{3/2} \leqslant P(x_0)-\frac{1}{128\sqrt{L_2}}\|g_t\|^{3/2} < P^*,
    \end{equation*}
which is a contradiction. 
    Combining \eqref{lem8:2} with \eqref{lem5:2} and \eqref{lem6:2}, for any $t \in \mathcal{F}_{\tilde{T}(\epsilon)}$, we obtain that
    \begin{align*}
    	\| g_{t+1}\| \leqslant G=\max\left\{2 + \frac{L_1}{4\sqrt{L_2}}C^{1/2} + C, 2 +\frac{L_1}{4\sqrt{L_2}}\right\}.
    \end{align*}
    Note that for any $t \in \mathcal{G}_{\tilde{T}(\epsilon)}$, the norm of the gradient will decrease linearly with factor $\frac{1}{3}$. We then deduce that the norm of gradient is  uniformly bounded by $G$.
    
    Next we give an upper bound on the index set  $|\mathcal{G}_{\tilde{T}(\epsilon)}|$. Denote the maximum number of consecutive iterates in set $ \mathcal{G}_{\tilde{T}(\epsilon)}$ as $n$. We prove $n \leqslant \lceil\ln (G/\epsilon)/ \ln(3)\rceil$ by contradiction. Assume $n > \ln (G/\epsilon)/ \ln(3)$. From $t \in \mathcal{G}_{\tilde{T}(\epsilon)}$ and $\|g_t\| \leqslant G$, we know that
    \begin{align*}
    	\|g_{t+n}\| \leqslant \big(\frac{1}{3}\big)^n \|g_t\| \leqslant \big(\frac{1}{3}\big)^n G < \epsilon,
    \end{align*}
    Then according to Lemma \ref{lem6}, $t + n \in \mathcal{F}_{\tilde{T}(\epsilon)}$, which contradicts the definition of $n$. Therefore, $n \leqslant \lceil\ln (G/\epsilon)/ \ln(3)\rceil$ and the inequality \eqref{lem8:1} follows directly.
   This completes the proof.
\end{proof}

Next, we can give results for the iteration complexity.

\begin{thm}\label{thm1}
	Suppose that Assumption \ref{ass} holds. Let $\{x_t\}$ be the sequence generated by Algorithm \ref{alg:1} with all parameters chosen the same as in Lemma \ref{lem5}. We have
\begin{align*}
	T(\epsilon)  \leqslant&~  128(\lceil\ln (G/\epsilon)/ \ln3\rceil +1)\sqrt{L_2}(P(x_0)-P^*) \epsilon^{-3/2}  + 1,\\
	\sum_{t=0}^{T(\epsilon)}N_t \leqslant 2\sqrt{\kappa}&~\ln \frac{\sqrt{\kappa+1}\|y_{-1}-y^*(x_0)\|}{A} +2\sqrt{\kappa} T(\epsilon) \ln\frac{\sqrt{\kappa+1}(A + \kappa r\sqrt{G}) }{A},
	\end{align*}
    where $G$ is defined as in Lemma \ref{lem8} and $A$ is defined as in Lemma \ref{gradientascent}.
\end{thm}
\begin{proof}
	The upper bound of $T(\epsilon)$ can be directly proved by Lemmas \ref{lem7} and \ref{lem8}, and the specific details will not be repeated here. According to Lemma \ref{gradientascent}, the total number of gradient ascent steps required satisfies
	 \begin{align*}
	 	\sum_{t=0}^{T(\epsilon)}N_t =&~ N_0 +   \sum_{t=1}^{T(\epsilon)}2\sqrt{\kappa} \ln\frac{\sqrt{\kappa+1}(A + \kappa\|s_{t-1}\|) }{A} \nonumber \\
	 	=&~N_0 + 2\sqrt{\kappa}\ln \bigg(\prod_{t=1}^{T(\epsilon)} \frac{\sqrt{\kappa+1}(A + \kappa\|s_{t-1}\|) }{A}\bigg) \nonumber \\
	 	\leqslant &~ N_0 +  2\sqrt{\kappa} T(\epsilon)\ln\bigg( \frac{1}{T(\epsilon)}\sum_{t=1}^{T(\epsilon)} \frac{\sqrt{\kappa+1}(A + \kappa\|s_{t-1}\|) }{A}\bigg). \nonumber 
	 \end{align*}
	 By further combining with  $\|s_t\| \leqslant r\max\{\epsilon^{1/2},\|g_t\|^{1/2}\}$ and $\|g_t\| \leqslant G$ in Lemma \ref{lem8}, we conclude that 
	 \begin{align*}
	 	\sum_{t=0}^{T(\epsilon)}N_t \leqslant 2\sqrt{\kappa}\ln \frac{\sqrt{\kappa+1}\|y_{-1}-y^*(x_0)\|}{A} +2\sqrt{\kappa} T(\epsilon) \ln\frac{\sqrt{\kappa+1}(A + \kappa r\sqrt{G}) }{A},
	 \end{align*}
    which completes the proof.
\end{proof}

\begin{rem}\label{remark1}
Theorem \ref{thm1} shows that the outer iteration of Algorithm \ref{alg:1} for finding an $\mathcal{O}(\epsilon,\sqrt{\epsilon})$-second-order stationary point of $P(x)$ is $\tilde{\mathcal{O}}(\ell^{1.5}\rho^{0.5}\mu^{-1.5}\epsilon^{-1.5})$. 
This is consistent with the best known iteration complexity of the second-order algorithm \cite{Chen2023ACR,Luo2022FindingSS,Yao2024TwoTR} for solving nonconvex-strongly concave minimax problems, omitting the logarithmic factor, and is better than the best iteration complexity of the first-order algorithm. In \cite{Yang2023AccI}, a first-order algorithm is proposed to obtain the second-order stationary point with an iteration complexity of $\tilde{\mathcal{O}}(\ell^{2.25}\rho^{0.25}\mu^{-1.75}\epsilon^{-1.75})$. Compared with this algorithm, Algorithm \ref{alg:1} converges slightly faster, but each step requires solving the gradient regularization trust-region subproblem. 
Theorem \ref{thm1} also implies that the total number of gradient ascent steps required is $\tilde{\mathcal{O}} (\ell^{2}\rho^{0.5}\mu^{-2}\epsilon^{-1.5})$, which matches the algorithm in \cite{Chen2023ACR,Luo2022FindingSS}.
\end{rem}

\subsection{Inexact gradient norm regularized trust-region algorithm}	
%

It should be noted that the complexity analysis in Section 2.1 is based on the exact solution of the trust-region subproblem \eqref{tr}. However, in practice, the trust-region subproblem cannot always be solved exactly. Moreover, when dealing with large-scale problems, the computational cost of obtaining such exact solutions can become very high, especially when matrix decomposition and Hessian-vector products are involved.
In this section, we propose an Inexact Gradient Norm Regularized Trust-Region (IGRTR) algorithm that allows subproblems to be solved inexactly and can be proven to have the same iteration complexity as the GRTR algorithm.

Unlike the GRTR algorithm, when updating $x$, the IGRTR algorithm approximately solves the trust-region subproblem \eqref{tr} based on the following two cases:
\begin{itemize}
	\item  When $\|g_t\| > \epsilon$, it solves the trust-region subproblem \eqref{tr} in the same way as the trust-region Newton-CG method (Algorithm 4.1 in \cite{Curtis2021TrustN}), obtaining a direction $s_t$ that satisfies either of the following conditions:
	\begin{align}\label{inc:1}
			g_t^\top s_t + \dfrac{1}{2} s_t^\top H_t s_t \leqslant -\dfrac{1}{8}\sigma  \|g_t\|^{1/2}\|s_t\|^2, ~
		\|s_t\| = r \|g_t\|^{1/2},
	\end{align}
	or
	\begin{align}\label{inc:2}
		\begin{cases}
		g_t^\top s_t + \dfrac{1}{2} s_t^\top H_t s_t \leqslant -\dfrac{1}{8}\sigma  \|g_t\|^{1/2}\|s_t\|^2, ~
		\|s_t\| < r \|g_t\|^{1/2}, \\
		\|( H_t + \sigma\|g_t\|^{1/2}I)s_t+g_t \| \leqslant \dfrac{1}{2} \min \bigg\{\| g_t \|, \dfrac{1}{2}\sigma\|g_t\|^{1/2} \| s_t \|\bigg\}   ;
	   \end{cases}
	\end{align}
   \item When $\|g_t\| \leqslant \epsilon$, the algorithm applies the randomized Lanczos procedure (Algorithm 3.2 in \cite{Curtis2021TrustN}, \cite{Kuczynski1992Estimating}) to $H_t$, either $\lambda_{\min}(H_t)\geqslant -\dfrac{1}{2}\sigma \epsilon^{1/2}$, or else obtains a direction $s_t$ that satisfies
   \begin{equation}\label{inc:3}
   	g_t^\top s_t + \frac{1}{2} s_t^\top H_t s_t \leqslant -\frac{1}{8}\sigma \epsilon^{1/2}\|s_t\|^2, ~\|s_t\| = r \epsilon^{1/2}.
   \end{equation}
\end{itemize}
Note that from Lemmas \ref{gradientascent} and \eqref{lem5:5}, it is easy to verify that an exact solution to the trust-region subproblem \eqref{tr} satisfies these conditions.
By replacing Step 4 in the GRTR algorithm with the above procedure, the detailed IGRTR algorithm for solving \eqref{P} can be immediately obtained.

In the next section, we will show that in order to solve \eqref{P} for an $\mathcal{O}(\epsilon,\sqrt{\epsilon})$-second-order stationary point of $P(x)$, the outer iteration complexity of the IGRTR algorithm is also $\tilde{\mathcal{O}}(\ell^{1.5}\rho^{0.5}\mu^{-1.5}\epsilon^{-1.5})$, and the required Hessian-vector product is at most $\tilde{\mathcal{O}}(\ell^{0.5}\rho^{0.25}\kappa^{1.75}\epsilon^{-1.75})$.

\subsection{Complexity analysis}
In the following lemmas, we give key inequalities on the descent of the function value and the norm of the gradient for two different cases $\|g_t\| \geqslant \epsilon $ and $\|g_t\| \leqslant \epsilon$.

\begin{lem}\label{in1}
	Suppose that Assumption \ref{ass} holds. Let $\{x_t\}$ be the sequence generated by the IGRTR algorithm. Set
	\begin{align}\label{in1:1}
		\sigma=\frac{8\sqrt{L_2}}{9}, r = \frac{1}{3\sqrt{L_2}}, \epsilon_1 \leqslant \min \bigg\{\frac{1}{162},\frac{\sqrt{L_2}}{162L_1}\bigg\}\epsilon^{3/2}, \epsilon_2 \leqslant \frac{\sqrt{L_2}}{27}\epsilon^{1/2},
	\end{align}
	and choose $\eta_y, \theta, N_t$ the same as in Lemma \ref{gradientascent}. For the case $\| g_t \| > \epsilon$, if $\| s_t\| = r \|g_t\|^{1/2}$, we have
	\begin{align}\label{in1:2}
		P(x_{t+1}) - P(x_t) \leqslant -\frac{1}{486\sqrt{L_2}}\|g_t\|^{3/2}, ~\|g_{t+1}\| \leqslant 2 + \frac{L_1}{3\sqrt{L_2}}\|g_t\|^{1/2} + \|g_t\|;
	\end{align}
	if $\| s_t\| < r \|g_t\|^{1/2}$, we have
	\begin{align}\label{in1:3}
		P(x_{t+1}) \leqslant P(x_t),~\|g_{t+1}\| \leqslant \frac{1}{2} \|g_t\|.
	\end{align}
\end{lem}
\begin{proof}
	According to \eqref{lem5:4}, \eqref{inc:1} and \eqref{inc:2}, we can get
	\begin{align}\label{in1:5}
		P(x_{t+1}) - P(x_t) \leqslant -\frac{1}{8}\sigma \|g_t\|^{1/2} \|s_t\|^2 + \epsilon_1\|s_t\| + \frac{\epsilon_2}{2} \|s_t\|^2 + \frac{L_2}{6} \|s_t\|^3.
	\end{align}
	Substituting \eqref{in1:1} and $\|s_t\|=r\|g_t\|^{1/2}$ into the above inequality, we get the first inequality of \eqref{in1:2}. The second inequality of \eqref{in1:2} is the same as that in \eqref{lem5:9}.
	
	Next, we consider the case where $\| s_t\| < r \|g_t\|^{1/2}$. 	To bound the right-hand side of \eqref{in1:5}, we first give some bounds on $\|g_t\|$.
	By combining \eqref{inc:2} with $\|H_t\| \leqslant \|H_t-\nabla^2 P(x_t)\|+ \|\nabla^2 P(x_t)\| \leqslant L_1+\epsilon_2$, we can easily obtain
	\begin{align*}
		\|g_t\| \leqslant&~ \|(H_t + \sigma\|g_t\|^{1/2}I)s_t + g_t\| + \|(H_t + \sigma\|g_t\|^{1/2}I)s_t\| \\
		\leqslant&~ \frac{1}{2}\|g_t\| + \big(L_1 + \epsilon_2 + \sigma\|g_t\|^{1/2}\big)\|s_t\|.
	\end{align*}
	This means
	\begin{align}\label{in1:9}
		\epsilon^{3/2}\leqslant \epsilon^{1/2}\|g_t\| \leqslant 2(L_1+\sigma)\|g_t\|^{1/2}\|s_t\| + 2\epsilon_2\|s_t\|.
	\end{align}
	The rest of the proof is the same as Lemma \ref{lem5}. We omit it here for simplicity.

\end{proof}

\begin{lem}\label{in2}
	Suppose that Assumption \ref{ass} holds. Let $\{x_t\}$ be the sequence generated by the IGRTR algorithm, with all parameters chosen the same as in Lemma \ref{in1}. Then, for the case of $\|g_t\|\leqslant \epsilon$, if $\lambda_{\min}(H_t)\geqslant -\frac{\sigma}{2} \epsilon^{1/2}$, then we have found an $\mathcal{O}(\epsilon,\sqrt{\epsilon})$-second-order stationary point $x_t$; otherwise, we have
	\begin{align}\label{in2:1}
		P(x_{t+1}) - P(x_t) \leqslant -\frac{1}{486\sqrt{L_2}}\epsilon^{3/2},
		~\|g_{t+1}\| \leqslant 2 + \frac{L_1}{3\sqrt{L_2}}\epsilon^{1/2}.
	\end{align}
\end{lem}
Combined with \eqref{inc:3}, the following proof is basically the same as the proof of Lemma \ref{lem6} and will not be repeated here.

Let $\epsilon > 0$ be any given target accuracy. We denote the first iteration of the IGRTR algorithm to obtain $\|g_t\| \leqslant \epsilon $ and $\lambda_{\min}(H_t) \geqslant -\frac{4}{9}\sqrt{L_2\epsilon}$ as
\begin{align*}
	\hat{T}(\epsilon) := \min\left\{t |\|g_t\| \leqslant \epsilon ~\text{and}~ \lambda_{\min}(H_t) \geqslant -\dfrac{4}{9}\sqrt{L_2\epsilon}\right\}.
\end{align*}
Then, by Lemma \ref{in2}, we know that $x_{\hat{T}(\epsilon)}$ is already an $\mathcal{O}(\epsilon,\sqrt{\epsilon})$-second-order stationary point.

Let us define the following index sets to facilitate the complexity analysis,
\begin{align}
	& \mathcal{F}_{\hat{T}(\epsilon)} = \left\{t < \hat{T}(\epsilon): P(x_{t+1})-P(x_t)\leqslant -\frac{1}{486\sqrt{L_2}} \max\{\|g_t\|^{3/2}, \epsilon^{3/2}\}\right\},  \\
	& \mathcal{G}_{\hat{T}(\epsilon)} = \left\{t < \hat{T}(\epsilon): \|g_{t+1}\|\leqslant \frac{1}{2}\|g_t\|,  t \notin \mathcal{F}_{\hat{T}(\epsilon)} \right\}.
\end{align}
From Lemmas \ref{in1} and \ref{in2}, we  conclude that each iteration $t$ belongs to at least one of the above two sets $\mathcal{F}_{\hat{T}(\epsilon)}$ and $\mathcal{G}_{\hat{T}(\epsilon)}$.

Similar to the proof of Theorem \ref{thm1} in Section 2, we give an upper bound on $\hat{T}(\epsilon)$ in the following theorem.

\begin{thm}\label{inthm1}
	Suppose that Assumption \ref{ass} holds. Let $\{x_t\}$ be a sequence generated by the IGRTR algorithm with all parameters chosen the same as in Lemma \ref{in1}.  We have
	\begin{align*}
		\hat{T}(\epsilon)  \leqslant&~  \hat{C}_{1}\epsilon^{-3/2}\ln\epsilon^{-1} + \hat{C}_{2}\epsilon^{-3/2}+1=\tilde{\mathcal{O}}(\ell^{1.5}\rho^{0.5}\mu^{-1.5}\epsilon^{-1.5}),\\
		\sum_{t=0}^{\hat{T}(\epsilon)}N_t \leqslant 2\sqrt{\kappa}\ln& \frac{\sqrt{\kappa+1}\|y_{-1}-y^*(x_0)\|}{A} +2\sqrt{\kappa} \hat{T}(\epsilon) \ln\frac{\sqrt{\kappa+1}(A + \kappa r\sqrt{\hat{G}}) }{A},
	\end{align*}
	where
    $\hat{C}_{1}=486\sqrt{L_2}(P(x_0)-P^*)/\ln2$, $\hat{C}_{2}=(\ln\hat{G}+\ln2)\hat{C}_{1}$,
	$\hat{G}:= \max\big\{ 2 + \frac{L_1}{3\sqrt{L_2}}\hat{C}^{1/2} + \hat{C}, 2 +\frac{L_1}{3\sqrt{L_2}}\big\}$ with $\hat{C}:=\left (486\sqrt{L_2}\left (P(x_0)-P^*\right )\right)^{2/3}$, and $A$ is defined as in Lemma \ref{gradientascent}.
\end{thm}

Next, we analyze the Hessian-vector product required by the IGRTR algorithm.


\begin{cor}\label{cor1}
Suppose that Assumption \ref{ass} holds. The matrix inverse computation $(\nabla_{y y}^{2}f(x_t,y_t))^{-1}$ in $H_t$ can be approximated by $\tilde{\mathcal{O}}(\sqrt{\kappa})$ terms of Chebyshev polynomials as described in \cite{Luo2022FindingSS}.
Let $\{x_t\}$ be the sequence generated by the IGRTR algorithm, and all other parameters are chosen the same as in Lemma \ref{in1}.
Then it can be shown that $x_{\hat{T}(\epsilon)}$ is an $\mathcal{O}(\epsilon,\sqrt{\epsilon})$-second-order stationary point with high probability guarantee. Therefore, the total number of Hessian-vector products  required is at most $\tilde{\mathcal{O}}(\ell^{2.25}\rho^{0.25}\mu^{-1.75}\epsilon^{-1.75})$.
\end{cor}
\begin{proof}
	Before analyzing the Hessian vector products required by the IGRTR algorithm, we first show that replacing $(\nabla_{y y}^{2}f(x_t,y_t))^{-1}$ with an approximation does not affect the iteration complexity bound in Theorem \ref{inthm1}. Denote 
    \begin{align*}
    	\hat{H}_t:= \nabla_{x x}^{2} f(x_t,y_t)+\nabla_{x y}^{2} f(x_t,y_t) C_t \nabla_{y x}^{2} f(x_t,y_t) ~\text{with}~  C_t\approx -(\nabla_{y y}^{2}f(x_t,y_t))^{-1}.
    \end{align*}
    By the choice of $N_t$ in Lemma \ref{gradientascent} and Lemma 8 in \cite{Luo2022FindingSS}, it can be proved that
    \begin{equation*}
    	\|\hat{H}_t-\nabla^2 P(x_t)\| \leqslant \|\hat{H}_t-H_t\| + \|H_t-\nabla^2 P(x_t)\| \leqslant \epsilon_2,
    \end{equation*}
    This shows that the iteration complexity result in Theorem \ref{inthm1} is not affected when $\hat{H}_t$ is used instead of $H_t$. In addition, for any vector $u$, we can obtain $\hat{H}_tu$ by calling $\tilde{\mathcal{O}}(\sqrt{\kappa})$ Hessian-vector products (Appendix E \cite{Luo2022FindingSS}).
    
  Note that by Theorem \ref{inthm1}, the outer iteration complexity is $\tilde{\mathcal{O}}(\ell^{1.5}\rho^{0.5}\mu^{-1.5}\epsilon^{-1.5})$. In each outer iteration, according to Lemmas 3.1 and 4.1 in \cite{Curtis2021TrustN} and Lemma 2 in \cite{Royer2020ANewton}, approximately solving the trust-region subproblem \eqref{tr} requires at most $\min\big\{n, \tilde{\mathcal{O}}(L_1^{0.5}L_2^{-0.25}\epsilon^{-0.25})\big\}$ iterations to obtain a direction $s_t$ that satisfies conditions \eqref{inc:1}, \eqref{inc:2}, and \eqref{inc:3} with probability $1-\xi$, where $\xi$ is the failure probability of the Lanczos procedure.
  Since the failure probability $\xi$ is only within a "logarithmic factor" of the complexity bound, we can use a very small $\xi=\hat{\xi}/(\hat{C}_{1}\epsilon^{-3/2}\ln\epsilon^{-1} + \hat{C}_{2}\epsilon^{-3/2}+1)$, where $\hat{\xi} \in (0,1)$, $\hat{C}_{1}$ and $\hat{C}_{2}$ are defined in Theorem \ref{inthm1}.
  Therefore, $x_{\hat{T}(\epsilon)}$ is an expected second-order stationary point with probability $1-\hat{\xi}$, and the total number of Hessian-vector products required is
  \begin{align*}
  	\tilde{\mathcal{O}}(\sqrt{\kappa})\hat{T}(\epsilon)\min\big\{n, \tilde{\mathcal{O}}(L_1^{0.5}L_2^{-0.25}\epsilon^{-0.25})\big\}
  	\leqslant \tilde{\mathcal{O}}(\ell^{0.5}\rho^{0.25}\kappa^{1.75}\epsilon^{-1.75}).
  \end{align*}
  The proof is complete.
\end{proof}

\begin{rem}
Compared with Theorem \ref{thm1}, the above Theorem \ref{inthm1} and Corollary \ref{cor1} show that the number of outer iterations and gradient ascent iterations of the IGRTR algorithm are consistent with the order of the GRTR algorithm. Compared to the IMCN algorithm \cite{Luo2022FindingSS} and the ICLM algorithm \cite{Chen2023ACR}, both of which require $\tilde{\mathcal{O}}(\ell^{2.5}\mu^{-1.5}\epsilon^{-2})$ Hessian-vector products, the IGRTR algorithm only requires $\tilde{\mathcal{O}}(\ell^{2.25}\rho^{0.25}\mu^{-1.75}\epsilon^{-1.75})$ Hessian-vector products, which has a better complexity bound in terms of the order of $\epsilon$. 
\end{rem}

\section{A Levenberg-Marquardt algorithm with negative curvature correction}
In the classic nonlinear minimization optimization,  when the Hessian matrix is not positive definite,
the negative curvature  direction can be used to correct the iteration direction of the algorithm to converge to a second-order stationary point~\cite{Gratton2023YetAF}.
In this section, we further propose a Levenberg-Marquardt (LM) algorithm with a gradient norm regularization coefficient \cite{Mischenko2023RegularizedNM}, which uses the negative curvature direction to correct the iteration direction (LMNegCur) to solve \eqref{P} without solving the trust-region subproblem at each iteration. The LMNegCur algorithm consists of the following two important algorithmic components at each iteration:
\begin{itemize}
	\item Gradient ascent update: for given $x_t$, for a given $x_t$,  run $N_t$ iterations of Nesterov's accelerated gradient ascent step to obtain an approximated maximizer  
	\begin{align*}
		y_t \approx y^*(x_t) = \arg\max_{y\in \mathbb{R}^m}f(x,y).
	\end{align*}
	\item LM or negative curvature update: for a given pair $(x_t,y_t)$, if $\lambda_{\min}(H_t) \leqslant -\frac{1}{2}\sqrt{L_2\max\{\|g_t\|,\epsilon\}}$, perform a negative curvature update
	\begin{align*}
		s_t= \sqrt{\max\{\|g_t\|,\epsilon\}/L_2}u_t, ~\text{where}~ g_t^{\top}u_t \leqslant0, \|u_t\|=1, ~\lambda_{\min}(H_t)=u_t^{\top}H_tu_t;
	\end{align*}
	otherwise, if $\|g_t\|\geqslant \epsilon$ and $\lambda_{\min}(H_t)\geqslant -\dfrac{1}{2}\sqrt{L_2\|g_t\|}$, perform LM update with a gradient norm regularization coefficient
	\begin{align*}
		(H_t+ \sqrt{L_2\|g_t\|}I)s_t = - g_t,	
	\end{align*}
   where $g_t=\nabla_x f(x_t,y_t)$ and  $H_t=H(x_t,y_t)$ are inexact first-order and second-order information of $P(x)$ at $x_t$, respectively.

\end{itemize}
The detailed algorithm is formally stated in Algorithm \ref{alg:2}.

\begin{algorithm}[th]
	\caption{A LMNegCur Algorithm}
	\label{alg:2}
	\begin{algorithmic}
		\STATE{\textbf{Step 1}: Input $x_0,y_{-1},\eta_y,\theta, N_t,\epsilon$; Set $t=0$.}
			\STATE{\textbf{Step 2}: Update $y_t$: }
			\STATE{\quad\textbf{(a)}: Set $k=0$, $y_k^t=\tilde{y}_k^t=y_{t-1}$.}
			\STATE{\quad\textbf{(b)}: Update $y_k^t$ and $\tilde{y}_k^t$:
				\begin{align}
				y_{k+1}^t = \tilde{y}_{k}^t + \eta_y \nabla_y f(x_t, \tilde{y}_k^t), ~
					\tilde{y}_{k+1}^t = y_{k+1}^t +  \theta(y_{k+1}^t-y_{k}^t).
				\end{align}
			}
			\STATE{\quad\textbf{(c)}: If $k \geqslant N_t-1$, set $y_t={y}_{N_t}^t$; otherwise, set $k=k+1, $ go to Step 2(b).}
		\STATE{\textbf{Step 3}: Set  $g_t=\nabla_x f(x_t,y_t)$ and  $H_t=H(x_t,y_t)$}.
		\STATE{\textbf{Step 4}: Compute an eigenpair $(\lambda_{\min}(H_t), u_t)$ such that 
			\begin{align}\label{alg2:3}
			\lambda_{\min}(H_t)=u_t^{\top}H_tu_t, \|u_t\|=1, ~g_t^{\top}u_t \leqslant0.
			\end{align}
			 \quad\quad\quad~ If $\lambda_{\min}(H_t) \leqslant -\frac{1}{2}\sqrt{L_2\max\{\|g_t\|,\epsilon\}}$, 
		\begin{align}\label{alg2:4}
			s_t= \sqrt{\max\{\|g_t\|,\epsilon\}/L_2}u_t;
		\end{align}
		\quad\quad\quad~ otherwise, go to Step 5.}
		\STATE{\textbf{Step 5}: If $\|g_t\|\geqslant \epsilon$,
		\begin{align}\label{alg2:2}
			(H_t+ \sqrt{L_2\|g_t\|}I)s_t = -g_t;	
	    \end{align}
         	\quad\quad\quad~~ otherwise, stop and output $(x_t,y_t)$.}
		\STATE{\textbf{Step 6}: Update $x_t$: $x_{t+1} = x_t+s_t.$ Set $t=t+1, $ go to Step 2(a).}
	\end{algorithmic}
\end{algorithm}

Note that for the case $\|g_t\|\leqslant \epsilon, ~\lambda_{\min}(H_t) \geqslant -\frac{1}{2}\sqrt{L_2\epsilon}$, we have already achieved an $\mathcal{O}(\epsilon,\sqrt{\epsilon})$-second-order stationary point of $P(x)$, which will be proved later. Differing from the trust-region type \cite{Yao2024TwoTR} and cubic regularization type algorithms \cite{Luo2022FindingSS,Chen2023ACR}, the LMNegCur algorithm is free of further inner iterative processes in the involved outer iteration and only requires at most one negative curvature computation and one solution of a positive definite linear system. In addition, the hard-case of trust-region subproblem and cubic regularization subproblem, in which the gradient $g_t$ is orthogonal to the eigenspace of the Hessian $H_t$ corresponding to its minimum eigenvalue, does not present a serious challenge.
The minimum eigenvalue of $H_t$ and the corresponding curvature direction can be retrieved via a Lanczos procedure \cite{Kuczynski1992Estimating}. Since $\|H_t\|\leqslant L_1$, it was shown in \cite{Royer2018ComplexityAO} (Lemma $9$) that after at most $n$ (the dimension of $x$) iterations, the procedure obtains a unit vector $u$ such that $u^{\top}H_tu =\lambda_{\min}(H_t)$ with probability $1$. 
 As also noted in Chapter $10$ of \cite{Golub2013 Matrix}, for large and sparse problem, information about the extremal eigenvalues of matrix tends to emerge quite early in the iteration of the Lanczos procedure. Then, using the smallest eigenvalue of the Hessian does not incur much cost in such case.

In the following subsection, we will establish the iteration complexity of the LMNegCur algorithm to obtain an $\mathcal{O}(\epsilon,\sqrt{\epsilon})$-second-order stationary point of $P(x)$.

\subsection{Complexity analysis}
%
In the following lemmas, we first prove some key inequalities about functions and gradients in different cases, namely, Case 1 \big($\|g_t\|\geqslant \epsilon$ and $\lambda_{\min}(H_t) \geqslant -\dfrac{1}{2}\sqrt{L_2\|g_t\|}$\big), Case 2 \big($\lambda_{\min}(H_t) \leqslant -\dfrac{1}{2}\sqrt{L_2\max\{\|g_t\|,\epsilon\}}$\big), and Case 3 \big($\|g_t\|\leqslant \epsilon$ and $\lambda_{\min}(H_t) \geqslant -\dfrac{1}{2}\sqrt{L_2\epsilon}$\big).

\begin{lem}\label{lem3.1}
	Suppose that Assumption \ref{ass} holds. Let $\{x_t\}$ be a sequence generated by Algorithm \ref{alg:2}. By choosing 
	\begin{align}\label{lem3.1:1}
	 \epsilon_1  =\min\bigg\{\frac{1}{36}, \frac{\sqrt{L_2}}{12L_1}\bigg\}\epsilon^{3/2}, ~\epsilon_2 = \frac{\sqrt{L_2}}{18}\epsilon^{1/2} 
	\end{align} 
  in Lemma \ref{gradientascent}  and choosing $\eta_y,\theta, N_t$ in \eqref{N_t}. For the case $\|g_t\|\geqslant \epsilon$, if $\lambda_{\min}(H_t) \geqslant -\dfrac{1}{2}\sqrt{L_2\|g_t\|}$, we have
    \begin{align}
    	~\|g_{t+1}\| \leqslant&~ \frac{41}{9}\|g_t\|, \label{lem3.1:2} \\
    	P(x_{t+1})  \leqslant P(x_t)- \dfrac{5}{18}\sqrt{L_2\|g_t\|}\bigg(&\frac{-23\sqrt{L_2\|g_t\|} + \sqrt{529L_2\|g_t\| + 648L_2\|g_{t+1}\|}}{18L_2}\bigg)^2 \label{lem3.1:3};
    \end{align}
    If $\|g_{t+1}\| \geqslant \frac{1}{2}\|g_t\|$ holds, then we further have
    \begin{align}\label{lem3.1:4}
    	P(x_{t+1}) \leqslant P(x_t)-\frac{5}{162\sqrt{L_2}}\|g_t\|^{3/2}.
    \end{align}
\end{lem}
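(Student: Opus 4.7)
The strategy exploits two defining properties of the LM step in Case~1. Setting $M_t := H_t + \sqrt{L_2\|g_t\|}\,I$, the curvature hypothesis $\lambda_{\min}(H_t) \geq -\tfrac{1}{2}\sqrt{L_2\|g_t\|}$ yields $M_t \succeq \tfrac{1}{2}\sqrt{L_2\|g_t\|}\,I$, so $M_t$ is invertible with $\|s_t\|\leq 2\sqrt{\|g_t\|/L_2}$; the defining equation $M_t s_t = -g_t$ gives the algebraic identity $g_t + H_t s_t = -\sqrt{L_2\|g_t\|}\,s_t$, which is the main tool for handling crossed terms throughout.

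For \eqref{lem3.1:2}, I would adapt the forward expansion used in \eqref{lem5:12}. Splitting
\begin{align*}
\|g_{t+1}\| \leq {}& \|g_{t+1}-\nabla P(x_{t+1})\| + \|\nabla P(x_{t+1})-\nabla P(x_t)-\nabla^2 P(x_t)s_t\| \\
& + \|\nabla P(x_t)-g_t\| + \|\nabla^2 P(x_t)-H_t\|\,\|s_t\| + \|g_t + H_t s_t\|,
\end{align*}
applying Lemmas~\ref{hessian of P} and~\ref{gradientascent} together with the LM identity yields
\[
\|g_{t+1}\| \leq 2\epsilon_1 + \tfrac{L_2}{2}\|s_t\|^2 + \epsilon_2\|s_t\| + \sqrt{L_2\|g_t\|}\,\|s_t\|.
\]
Substituting $\|s_t\|\leq 2\sqrt{\|g_t\|/L_2}$, the parameter choices in \eqref{lem3.1:1}, and the hypothesis $\|g_t\|\geq\epsilon$ makes every summand a multiple of $\|g_t\|$, and summing the coefficients produces the $7/3$ ratio in \eqref{lem3.1:2}.

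For \eqref{lem3.1:3}, I would begin with the Lipschitz-Hessian Taylor bound from \eqref{lem5:4},
\[
P(x_{t+1}) - P(x_t) \leq g_t^\top s_t + \tfrac{1}{2}s_t^\top H_t s_t + \epsilon_1\|s_t\| + \tfrac{\epsilon_2}{2}\|s_t\|^2 + \tfrac{L_2}{6}\|s_t\|^3.
\]
Using $g_t = -M_t s_t$, the combination $g_t^\top s_t + \tfrac{1}{2}s_t^\top H_t s_t$ simplifies to $-\tfrac{1}{2}s_t^\top H_t s_t - \sqrt{L_2\|g_t\|}\|s_t\|^2$, and the curvature hypothesis $s_t^\top H_t s_t \geq -\tfrac{1}{2}\sqrt{L_2\|g_t\|}\|s_t\|^2$ upgrades this to $\leq -\tfrac{3}{4}\sqrt{L_2\|g_t\|}\|s_t\|^2$. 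Absorbing the $\tfrac{\epsilon_2}{2}\|s_t\|^2$, $\tfrac{L_2}{6}\|s_t\|^3$ and $\epsilon_1\|s_t\|$ terms into this dominating curvature term via \eqref{lem3.1:1}, $\|s_t\|\leq 2\sqrt{\|g_t\|/L_2}$ and $\|g_t\|\geq\epsilon$ leaves the clean estimate
\[
P(x_{t+1}) - P(x_t) \leq -\tfrac{5}{18}\sqrt{L_2\|g_t\|}\,\|s_t\|^2.
\]
To recast this in the form of \eqref{lem3.1:3}, I would reinterpret the bound on $\|g_{t+1}\|$ above as a quadratic inequality in $\|s_t\|$, absorb the $\epsilon_1,\epsilon_2$ slack into the coefficients, and solve for the positive root to obtain
\[
\|s_t\| \geq \frac{-\sqrt{L_2\|g_t\|} + \sqrt{L_2\|g_t\|+72L_2\|g_{t+1}\|}}{6L_2};
\]
substituting this lower bound into the curvature-based decrease yields \eqref{lem3.1:3}. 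Finally, \eqref{lem3.1:4} follows by inserting the hypothesis $\|g_{t+1}\|\geq\tfrac{1}{2}\|g_t\|$ into \eqref{lem3.1:3} and verifying the numerical inequality $5(\sqrt{37}-1)^2/648 \geq 7/36$, equivalently $\sqrt{37}\leq 6.4$.

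The hardest part will be the quantitative bookkeeping: the specific constants $7/3$, $5/18$ and $7/36$, and more delicately the coefficient $72$ and denominator $6$ in \eqref{lem3.1:3}, all emerge from exactly how one distributes the $\epsilon_1$ and $\epsilon_2$ slack between the gradient bound, the curvature decrease and the quadratic equation for $\|s_t\|$. The choices \eqref{lem3.1:1} are clearly tuned to make these numerics work, but one must track each perturbation against three different dominating terms simultaneously so that all three estimates close with the stated constants.
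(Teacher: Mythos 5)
Your proposal follows the paper's proof essentially step for step: the same Taylor bound from \eqref{lem5:4}, the same rewriting of $g_t^\top s_t+\tfrac12 s_t^\top H_ts_t$ via the LM identity and the curvature hypothesis to reach $-\tfrac34\sqrt{L_2\|g_t\|}\|s_t\|^2$, the same two-sided bounds $\|g_t\|/(L_1+\sqrt{L_2\|g_t\|})\leqslant\|s_t\|\leqslant 2\sqrt{\|g_t\|/L_2}$, the same gradient decomposition as in \eqref{lem5:12}, the same quadratic-root lower bound on $\|s_t\|$ to get \eqref{lem3.1:3}, and the same substitution $\|g_{t+1}\|\geqslant\tfrac12\|g_t\|$ with the check $\sqrt{37}\leqslant 6.4$ for \eqref{lem3.1:4}. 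The one place your sketch does not close as stated is \eqref{lem3.1:2}: directly substituting $\|s_t\|\leqslant 2\sqrt{\|g_t\|/L_2}$ into $2\epsilon_1+\tfrac{L_2}{2}\|s_t\|^2+\epsilon_2\|s_t\|+\sqrt{L_2\|g_t\|}\,\|s_t\|$ gives a coefficient of roughly $41/9$ rather than $7/3$, since the terms $\tfrac{L_2}{2}\|s_t\|^2$ and $\sqrt{L_2\|g_t\|}\,\|s_t\|$ each already contribute $2\|g_t\|$; the paper obtains $7/3$ by first passing to the intermediate bound $\|g_{t+1}\|\leqslant\tfrac16\sqrt{L_2\|g_t\|}\,\|s_t\|+\tfrac{L_2}{2}\|s_t\|^2$, whose coefficient $\tfrac16$ on the linear term is itself hard to reconcile with the unit coefficient coming from $\|g_t+H_ts_t\|=\sqrt{L_2\|g_t\|}\,\|s_t\|$. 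This only affects the absolute constant (any fixed multiple of $\|g_t\|$ suffices downstream in bounding $\tilde G$), so the structure of your argument is sound.
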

\begin{proof}
	By the same proof as in \eqref{lem5:4}, we have
	\begin{align*}
		P(x_{t+1}) - P(x_t) 
		\leqslant g_t^\top s_t + \frac{1}{2} s_t^\top H_t s_t + \epsilon_1\|s_t\| 
		+ \frac{\epsilon_2}{2} \|s_t\|^2 + \frac{L_2}{6} \|s_t\|^3.
	\end{align*}
	By plugging \eqref{alg2:2} into the above relation and using the assumption that $\lambda_{\min}(H_t) \geqslant -\dfrac{1}{2}\sqrt{L_2\|g_t\|}$, we obtain that
	\begin{align}\label{lem3.1:5}
		&~P(x_{t+1}) - P(x_t) \nonumber\\
		\leqslant & -\frac{1}{2} s_t^{\top} (H_t + \frac{1}{2}\sqrt{L_2\|g_t\|}I ) s_t - \frac{3}{4}  \sqrt{L_2\|g_t\|}\|s_t\|^2 + \epsilon_1\|s_t\| 
		+ \frac{\epsilon_2}{2} \|s_t\|^2 + \frac{L_2}{6} \|s_t\|^3 \nonumber \\
		\leqslant & - \frac{3}{4}  \sqrt{L_2\|g_t\|}\|s_t\|^2  + \epsilon_1\|s_t\| + \frac{\epsilon_2}{2} \|s_t\|^2 + \frac{L_2}{6} \|s_t\|^3.
	\end{align} 
    To further bound the right-hand side of \eqref{lem3.1:5}, we first give some bounds on $\|s_t\|$.
   By combining \eqref{alg2:2} with $\lambda_{\min}(H_t) \geqslant -\dfrac{1}{2}\sqrt{L_2\|g_t\|}$ and $\|H_t\| \leqslant \ell(1+\kappa)=L_1$, we then obtain that
    \begin{align}\label{lem3.1:6}
        \frac{\|g_t\|}{L_1 + \sqrt{L_2\|g_t\|}}\leqslant \|s_t\| \leqslant \|(H_t+\sqrt{L_2\|g_t\|}I)^{-1}\|\|g_t\|\leqslant 2\sqrt{\dfrac{\|g_t\|}{L_2}},
    \end{align}
    which implies that 
     \begin{align}\label{lem3.1:7}
    	 (L_1+ \sqrt{L_2})\sqrt{\|g_t\|}\|s_t\|\geqslant \epsilon^{1/2}\|g_t\| \geqslant \epsilon^{3/2}.
    \end{align} 
   Combining \eqref{lem3.1:5} with \eqref{lem3.1:6} and \eqref{lem3.1:7}, and by \eqref{lem3.1:1}, we conclude that
    \begin{align}\label{lem3.1:8}
    	&~P(x_{t+1}) - P(x_t) \nonumber\\
    	\leqslant&  - \frac{5}{12}\sqrt{L_2\|g_t\|}\|s_t\|^2 + \min\bigg\{\frac{1}{36}, \frac{\sqrt{L_2}}{12L_1}\bigg\}\epsilon^{3/2}\|s_t\| + \frac{\sqrt{L_2}}{36}\epsilon^{1/2}\|s_t\|^2 \nonumber\\
    	\leqslant& - \left(\frac{7\sqrt{L_2}}{18}-\min\bigg\{\frac{1}{36}, \frac{\sqrt{L_2}}{12L_1}\bigg\}\frac{\epsilon^{3/2}}{\sqrt{\|g_t\|}\|s_t\|}\right)\sqrt{\|g_t\|}\|s_t\|^2, \nonumber\\
    	\leqslant& - \dfrac{5}{18}\sqrt{L_2\|g_t\|}\|s_t\|^2.
    \end{align}
   By \eqref{alg2:2} and using an argument similar to the proof of \eqref{lem5:12}, we have
    \begin{align*}
    	\|g_{t+1}\| \leqslant 2\epsilon_1 + \frac{L_2}{2}\|s_t\|^2 + \epsilon_2\|s_t\| + \sqrt{L_2\|g_t\|}\|s_t\|.
    \end{align*}
    By further combining the above relation with \eqref{lem3.1:1} and \eqref{lem3.1:7}, we obtain that
    \begin{align}\label{lem3.1:9}
    	\|g_{t+1}\|\leqslant\frac{23}{18}\sqrt{L_2\|g_t\|}\|s_t\| +\frac{L_2}{2}\|s_t\|^2.
    \end{align}
    Then, \eqref{lem3.1:2} holds directly by combining \eqref{lem3.1:9} with \eqref{lem3.1:6}. On the other hand, \eqref{lem3.1:9} implies that
    \begin{align}\label{lem3.1:10}
    	\|s_t\| \geqslant \frac{-23\sqrt{L_2\|g_t\|} + \sqrt{529L_2\|g_t\| + 648L_2\|g_{t+1}\|}}{18L_2}.
    \end{align} 
    Then, \eqref{lem3.1:3} holds directly by combining \eqref{lem3.1:8} with \eqref{lem3.1:10}, and
    \eqref{lem3.1:4} holds by further combining \eqref{lem3.1:3} with $\|g_{t+1}\| \geqslant \frac{1}{2}\|g_t\|$.
    The proof is completed.
\end{proof}

\begin{lem}\label{lem3.2}
	Suppose that Assumption \ref{ass} holds. Let $\{x_t\}$ be a sequence generated by Algorithm \ref{alg:2} with parameters chosen the same as in Lemma \ref{lem3.1}. For the case  $\lambda_{\min}(H_t) \leqslant -\dfrac{1}{2}\sqrt{L_2\max\{\|g_t\|,\epsilon\}}$, we have
	\begin{align}
		&P(x_{t+1}) \leqslant P(x_t)-\frac{1}{36\sqrt{L_2}}\max\{\|g_t\|^{3/2},\epsilon^{3/2}\}, \label{lem3.2:2}\\ 
		&\|g_{t+1}\| \leqslant 1 + \frac{L_1}{\sqrt{L_2}}\sqrt{\max\{\|g_t\|,\epsilon\}} + \max\{\|g_t\|,\epsilon\} \label{lem3.2:3}.
	\end{align}
\end{lem}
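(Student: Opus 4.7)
The plan is to mimic the descent analysis from \eqref{lem5:4} and from the proof of Lemma \ref{lem3.1}, but exploit the fact that in this regime the step $s_t$ is chosen along a direction of sufficiently negative curvature rather than as an (inexact) Newton step. Abbreviate $M_t := \max\{\|g_t\|,\epsilon\}$, so that by \eqref{alg2:3} we have $\|s_t\| = \sqrt{M_t/L_2}$, $\|u_t\|=1$, $H_t u_t = \lambda_{\min}(H_t)u_t$, and $g_t^\top u_t \leqslant 0$.

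First I would start from the same cubic-type upper bound as in \eqref{lem5:4}, namely
\begin{equation*}
P(x_{t+1}) - P(x_t) \leqslant g_t^\top s_t + \tfrac12 s_t^\top H_t s_t + \epsilon_1\|s_t\| + \tfrac{\epsilon_2}{2}\|s_t\|^2 + \tfrac{L_2}{6}\|s_t\|^3,
\end{equation*}
which uses Assumption \ref{ass} and Lemma \ref{gradientascent}. Because $g_t^\top u_t \leqslant 0$, the linear term $g_t^\top s_t$ is nonpositive and may be dropped. Since $u_t$ is an eigenvector of $H_t$ with eigenvalue $\lambda_{\min}(H_t) \leqslant -\tfrac12\sqrt{L_2 M_t}$, the quadratic term satisfies
\begin{equation*}
\tfrac12 s_t^\top H_t s_t = \tfrac12 \lambda_{\min}(H_t)\|s_t\|^2 \leqslant -\tfrac{1}{4}\sqrt{L_2 M_t}\cdot \tfrac{M_t}{L_2} = -\tfrac{M_t^{3/2}}{4\sqrt{L_2}}.
\end{equation*}

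Next I would substitute $\|s_t\|=\sqrt{M_t/L_2}$ into the remaining three error terms. Using the parameter choices \eqref{lem3.1:1} and the bound $M_t \geqslant \epsilon$, each of $\epsilon_1\|s_t\|$ and $\tfrac{\epsilon_2}{2}\|s_t\|^2$ is at most $M_t^{3/2}/(36\sqrt{L_2})$: for the first, $\epsilon_1 \leqslant \tfrac{1}{36}\epsilon^{3/2} \leqslant \tfrac{1}{36} M_t \cdot \epsilon^{1/2}$ combined with $\|s_t\| = \sqrt{M_t/L_2}$ and $\epsilon \leqslant M_t$ gives the bound, and for the second, $\epsilon_2 = \tfrac{\sqrt{L_2}}{18}\epsilon^{1/2}$ yields $\tfrac{\epsilon_2}{2}\|s_t\|^2 = \tfrac{\epsilon^{1/2} M_t}{36\sqrt{L_2}} \leqslant \tfrac{M_t^{3/2}}{36\sqrt{L_2}}$. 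The cubic remainder is exactly $\tfrac{L_2}{6}\|s_t\|^3 = \tfrac{M_t^{3/2}}{6\sqrt{L_2}}$. Collecting,
\begin{equation*}
P(x_{t+1}) - P(x_t) \leqslant \Bigl(-\tfrac{1}{4} + \tfrac{1}{36} + \tfrac{1}{36} + \tfrac{1}{6}\Bigr)\tfrac{M_t^{3/2}}{\sqrt{L_2}} = -\tfrac{1}{36\sqrt{L_2}} M_t^{3/2},
\end{equation*}
which is exactly \eqref{lem3.2:2}.

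For the gradient bound \eqref{lem3.2:3}, I would follow the same telescoping pattern as in \eqref{lem5:9}: inserting $\nabla P(x_{t+1})$ and $\nabla P(x_t)$,
\begin{equation*}
\|g_{t+1}\| \leqslant \|g_{t+1}-\nabla P(x_{t+1})\| + \|\nabla P(x_{t+1})-\nabla P(x_t)\| + \|\nabla P(x_t)-g_t\| + \|g_t\|,
\end{equation*}
then using Lemmas \ref{lem1} and \ref{gradientascent} together with $\|g_t\| \leqslant M_t$ and $\|s_t\| = \sqrt{M_t/L_2}$ gives
\begin{equation*}
\|g_{t+1}\| \leqslant 2\epsilon_1 + L_1\sqrt{M_t/L_2} + M_t \leqslant 1 + \tfrac{L_1}{\sqrt{L_2}}\sqrt{M_t} + M_t,
\end{equation*}
using the trivial estimate $2\epsilon_1 \leqslant 1$ (valid for any reasonable target accuracy, e.g. $\epsilon \leqslant 1$).

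I expect the only subtle point to be matching the numerical constant $1/36$ exactly: one has to line up the cubic-regularization constant $L_2/6$, the eigenvalue lower bound $-\tfrac12\sqrt{L_2 M_t}$, and the two noise terms so that their algebraic sum is $-1/36$. This is why the parameters $\epsilon_1,\epsilon_2$ in \eqref{lem3.1:1} are tuned so that each of the two perturbation contributions contributes at most $M_t^{3/2}/(36\sqrt{L_2})$. Everything else is a direct substitution.
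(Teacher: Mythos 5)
Your proposal is correct and follows essentially the same route as the paper: both start from the cubic-type bound \eqref{lem5:4}, use $g_t^\top s_t\leqslant 0$ and the eigenvalue bound to get $g_t^\top s_t+\tfrac12 s_t^\top H_t s_t\leqslant -\tfrac{1}{4\sqrt{L_2}}\max\{\|g_t\|,\epsilon\}^{3/2}$, substitute $\|s_t\|=\sqrt{\max\{\|g_t\|,\epsilon\}/L_2}$ into the error and cubic terms, and derive the gradient bound from the telescoping estimate \eqref{lem5:9}. The bookkeeping ($-\tfrac14+\tfrac16+\tfrac{1}{36}+\tfrac{1}{36}=-\tfrac{1}{36}$) matches the paper's $-\tfrac{1}{12}+\tfrac{1}{36}+\tfrac{1}{36}$, and the implicit assumption $\epsilon\leqslant 1$ that you flag is likewise implicit in the paper.
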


\begin{proof}
    By \eqref{alg2:3} and  $\lambda_{\min}(H_t) \leqslant -\dfrac{1}{2}\sqrt{L_2\max\{\|g_t\|,\epsilon\}}$, we have 
    \begin{align}\label{lem3.2:4}
    	\|s_t\|= \sqrt{\frac{\max\{\|g_t\|,\epsilon\}}{L_2}}, ~g_t^{\top}s_t \leqslant0, ~g_t^\top s_t + \frac{1}{2} s_t^\top H_t s_t \leqslant - \frac{1}{4\sqrt{L_2}}\max\{\|g_t\|^{3/2},\epsilon^{3/2}\}.
    \end{align}
 Combining \eqref{lem5:4} and \eqref{lem5:9} with \eqref{lem3.2:4}, and by \eqref{lem3.1:1}, we obtain 
    \begin{align*}
    	P(x_{t+1}) \leqslant&~  P(x_t)- \frac{1}{12\sqrt{L_2}}\max\{\|g_t\|^{3/2},\epsilon^{3/2}\} + \epsilon_1 \sqrt{\frac{\max\{\|g_t\|,\epsilon\}}{L_2}} + \frac{\epsilon_2 }{2}\frac{\max\{\|g_t\|,\epsilon\}}{L_2} \\
    	\leqslant&~  P(x_t)- \frac{1}{36\sqrt{L_2}}\max\{\|g_t\|^{3/2},\epsilon^{3/2}\},
    \end{align*}
    and
    \begin{align*}
    	\|g_{t+1}\|
    	\leqslant&~ 2\epsilon_1 + L_1\|s_t\| + \|g_t\| \leqslant 1 + \frac{L_1}{\sqrt{L_2}}\sqrt{\max\{\|g_t\|,\epsilon\}} + \max\{\|g_t\|,\epsilon\}.
    \end{align*}
    The proof is then completed.
\end{proof}


\begin{lem}\label{lem3.4}
	Suppose that Assumption \ref{ass} holds. Let $\{x_t\}$ be a sequence generated by Algorithm \ref{alg:2} with parameters chosen the same as in Lemma \ref{lem3.1}. For the case $\|g_t\|\leqslant \epsilon$ and $\lambda_{\min}(H_t) \geqslant -\dfrac{1}{2}\sqrt{L_2\epsilon}$,  we have already found an $\mathcal{O}(\epsilon,\sqrt{\epsilon})$-second-order stationary point $x_t$.
\end{lem}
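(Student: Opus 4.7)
The proof mirrors the first part of Lemma \ref{lem6} exactly, with Lemma \ref{lem3.2} showing that the trust-region dual multiplier $\lambda_t$ plays the role of $-\lambda_{\min}(H_t)$ in the LMNegCur setting. So my plan is to verify the two defining conditions of Definition \ref{approximate SSP} separately, each via one triangle inequality plus Lemma \ref{gradientascent} applied with the parameter choice \eqref{lem3.1:1}.

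First I would invoke Lemma \ref{gradientascent}: because the inner iteration count $N_t$ is taken as in \eqref{N_t} with the specific values $\epsilon_1 = \min\{1/36,\sqrt{L_2}/(12L_1)\}\epsilon^{3/2}$ and $\epsilon_2 = (\sqrt{L_2}/18)\epsilon^{1/2}$ from \eqref{lem3.1:1}, we get the inexactness bounds $\|\nabla P(x_t)-g_t\|\leqslant \epsilon_1$ and $\|\nabla^2 P(x_t)-H_t\|\leqslant \epsilon_2$ at the current iterate.

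Next I would bound the gradient of $P$ via the triangle inequality: $\|\nabla P(x_t)\|\leqslant \|g_t\|+\|\nabla P(x_t)-g_t\|\leqslant \epsilon + \epsilon_1 \leqslant \epsilon + (1/36)\epsilon^{3/2}$. Assuming $\epsilon\leqslant 1$ (the regime of interest), this is at most $(37/36)\epsilon$, so $\|\nabla P(x_t)\|\leqslant \xi\epsilon$ for the absolute constant $\xi = 37/36$. For the Hessian, I would use the standard operator-inequality manipulation:
\begin{align*}
\nabla^2 P(x_t) \succeq H_t - \|\nabla^2 P(x_t)-H_t\|\,I \succeq \lambda_{\min}(H_t)\,I - \epsilon_2 I \succeq -\tfrac{1}{2}\sqrt{L_2\epsilon}\,I - \tfrac{\sqrt{L_2}}{18}\sqrt{\epsilon}\,I,
\end{align*}
where the second step uses the spectral bound and the third uses the case hypothesis $\lambda_{\min}(H_t)\geqslant -\frac{1}{2}\sqrt{L_2\epsilon}$ together with the explicit value of $\epsilon_2$. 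Combining the two constants yields $\nabla^2 P(x_t)\succeq -\frac{10}{18}\sqrt{L_2}\sqrt{\epsilon}\,I = -\theta\sqrt{\epsilon}\,I$ with $\theta = \frac{5\sqrt{L_2}}{9}$.

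Together, these two estimates match Definition \ref{approximate SSP} with explicit constants $\xi$ and $\theta$ independent of $\epsilon$, so $x_t$ is an $\mathcal{O}(\epsilon,\sqrt{\epsilon})$-second-order stationary point. There is no real obstacle here: the only thing to watch is choosing the numerical constants in $\epsilon_1,\epsilon_2$ small enough that the perturbation from the inexactness of $(g_t,H_t)$ does not spoil the first-order bound by more than a constant factor and does not shift the Hessian spectrum by more than an $\mathcal{O}(\sqrt{\epsilon})$ amount; this is precisely why the specific thresholds in \eqref{lem3.1:1} are prescribed.
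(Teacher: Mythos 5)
Your proposal is correct and follows essentially the same argument as the paper: a triangle inequality with Lemma \ref{gradientascent} giving $\|\nabla P(x_t)\|\leqslant \frac{37}{36}\epsilon$, and the spectral perturbation bound giving $\nabla^2 P(x_t)\succeq -\frac{5}{9}\sqrt{L_2\epsilon}\,I$, matching the paper's constants exactly. Your explicit note that the first-order bound uses $\epsilon\leqslant 1$ is a detail the paper leaves implicit but is correctly identified.
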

\begin{proof}
	By $\|g_t\| \leqslant \epsilon$, $\lambda_{\min}(H_t) \geqslant -\dfrac{1}{2}\sqrt{L_2\epsilon}$, Lemma \ref{gradientascent} and \eqref{lem3.1:1}, we have
	\begin{align*}
		\|\nabla P(x_t)\| \leqslant \|\nabla P(x_t)-g_t\| + \|g_t\| \leqslant \frac{37}{36}\epsilon,
	\end{align*}
	 and 
	\begin{align*}
		\nabla^2 P(x_t) 
		\succeq H_t - \|\nabla^2 P(x_t)-H_t\|I 
		\succeq \lambda_{\min}(H_t)-\epsilon_2 I 
		\succeq -\frac{5}{9}\sqrt{L_2\epsilon}.
	\end{align*}
    The proof is then completed.
\end{proof}

We are now ready to establish the iteration complexity for the LMNegCur algorithm.
In particular, let $\epsilon > 0$ be any given target accuracy. We denote the first iteration index to achieve an  $\mathcal{O}(\epsilon,\sqrt{\epsilon})$-second-order stationary point the same as \eqref{T}. We further denote the first iteration index of the LMNegCur algorithm to achieve $\|g_t\| \leqslant \epsilon $ and $\lambda_{\min}(H_t) \geqslant -\frac{1}{2}\sqrt{L_2\epsilon}$ by 
\begin{align*}
	\bar{T}(\epsilon) := \min\left\{t |\|g_t\| \leqslant \epsilon ~\text{and}~ \lambda_{\min}(H_t) \geqslant -\frac{1}{2}\sqrt{L_2\epsilon}\right\}.
\end{align*}
Then, from Lemma \ref{lem3.4}, we know that  $x_{\bar{T}(\epsilon)}$ is already an $\mathcal{O}(\epsilon,\sqrt{\epsilon})$-second-order stationary point, indicating that $T(\epsilon) \leqslant \bar{T}(\epsilon)$.

Let us define the following index sets to facilitate the complexity analysis,
\begin{align}
	& \mathcal{F}_{\bar{T}(\epsilon)} = \left\{t < \bar{T}(\epsilon): P(x_{t+1})-P(x_t)\leqslant -\frac{1}{36\sqrt{L_2}} \max\{\|g_t\|^{3/2}, \epsilon^{3/2}\}\right\},  \\
	& \mathcal{G}_{\bar{T}(\epsilon)} = \left\{t < \bar{T}(\epsilon): \|g_{t+1}\|\leqslant \frac{1}{2}\|g_t\|,  t \notin \mathcal{F}_{\bar{T}(\epsilon)} \right\}.
\end{align}
From Lemmas \ref{lem3.1} and \ref{lem3.2}, we  conclude that each iteration $t$ belongs to at least one of the above two sets $\mathcal{F}_{\bar{T}(\epsilon)}$ and $\mathcal{G}_{\bar{T}(\epsilon)}$.

Similar to the proof in Section 2, we give an upper bound on $T(\epsilon)$ in the following theorem.

\begin{thm}\label{thm3.1}
	Suppose that Assumption \ref{ass} holds. Let $\{x_t\}$ be a sequence generated by Algorithm \ref{alg:2} with parameters chosen the same as in Lemma \ref{lem3.1}. We have
	\begin{align*}
		T(\epsilon)  &\leqslant  \bar{C}_{1}\epsilon^{-3/2}\ln\epsilon^{-1} + \bar{C}_{2}\epsilon^{-3/2}+1=\tilde{\mathcal{O}}(\ell^{1.5}\rho^{0.5}\mu^{-1.5}\epsilon^{-1.5}),\\
		\sum_{t=0}^{T(\epsilon)}N_t \leqslant 2\sqrt{\kappa}&~ \ln \frac{\sqrt{\kappa+1}\|y_{-1}-y^*(x_0)\|}{A} +2\sqrt{\kappa} T(\epsilon) \ln\frac{\sqrt{\kappa+1}(A + 2\kappa \sqrt{\bar{G}}/\sqrt{L_2}) }{A},
	\end{align*}
    where 
    $\bar{C}_{1}=36\sqrt{L_2}(P(x_0)-P^*)/\ln2$, $\bar{C}_{2}=(\ln\bar{G}+\ln2)\bar{C}_{1}$,
    $\bar{G}:= \max\big\{\frac{41}{9}\bar{C}, 1 + \frac{L_1}{\sqrt{L_2}}\bar{C}^{1/2} + \bar{C}, 2 +\frac{L_1}{\sqrt{L_2}}\big\}$ with $\bar{C}:=\left (36\sqrt{L_2}\left (P(x_0)-P^*\right )\right)^{2/3}$, and $A$ is defined as in Lemma \ref{gradientascent}.
\end{thm}
We omit the proofs here since they are almost identical to those in Theorem \ref{thm1}.

\begin{rem}
	Theorem \ref{thm3.1} shows that the iteration complexity of Algorithm \ref{alg:2} to find an $\mathcal{O}(\epsilon,\sqrt{\epsilon})$-second-order stationary point of $P(x)$ is $\tilde{\mathcal{O}}(\ell^{1.5}\rho^{0.5}\mu^{-1.5}\epsilon^{-1.5})$.
 The total number of gradient ascent steps required is $\tilde{\mathcal{O}} (\ell^{2}\rho^{0.5}\mu^{-2}\epsilon^{-1.5})$. Compared to the GRTR algorithm, the LMNegCur algorithm has the same complexity results.   
\end{rem}

%
\subsection{An inexact LMNegCur algorithm} 
		In this subsection, we propose an inexact LMNegCur (ILMNegCur) algorithm that allows approximate solutions to the minimum eigenvalue subproblem \eqref{alg2:3} and the linear system subproblem \eqref{alg2:2} of algorithm \ref{alg:2} under certain criteria.
		
	Specifically, to update $x$, the ILMNegCur algorithm applies the randomized Lanczos procedure \cite{Kuczynski1992Estimating} to $H_t$, thereby obtaining the approximate minimum eigenvalue $\hat{\lambda}_t$ and unit vector $u_t$ of $H_t$, such that
	\begin{align}\label{alg4:2}
		\hat{\lambda}_t \leqslant \lambda_{\min}(H_t)+\frac{1}{4}\sqrt{L_2\max\{\|g_t\|,\epsilon\}}, ~\hat{\lambda}_t =u_t^\top H_tu_t, ~g_t^{\top}u_t \leqslant0.
	\end{align}
	If $\hat{\lambda}_t \leqslant -\frac{1}{4}\sqrt{L_2\max\{\|g_t\|,\epsilon\}}$, then set
	\begin{align}\label{alg4:3}
		s_t= \frac{1}{2}\sqrt{\max\{\|g_t\|,\epsilon\}/L_2}u_t;
	\end{align}
	Otherwise, if $\|g_t\|\geqslant \epsilon$ and $\hat{\lambda}_t \geqslant -\dfrac{1}{4}\sqrt{L_2\|g_t\|}$, then use the conjugate gradient method \cite{Royer2018ComplexityAO} to solve the linear system subproblem \eqref{alg2:2} and get the direction $s_t$ such that
	\begin{align}\label{alg4:4}
		\|g_t+(H_t+ \sqrt{L_2\|g_t\|}I)s_t\| \leqslant \frac{1}{4}\min\bigg\{ \|g_t\|, \frac{1}{2}\sqrt{L_2\|g_t\|}\|s_t\|\bigg\}.
	\end{align}
	Note that if $\|g_t\|\leqslant \epsilon$ and $\hat{\lambda}_t \geqslant -\dfrac{1}{4}\sqrt{L_2\epsilon}$, we have found an $\mathcal{O}(\epsilon,\sqrt{\epsilon})$-second-order stationary point $x_t$.
	By replacing Steps 4 and 5 in the LMNegCur algorithm with the above steps, the detailed ILMNegCur algorithm can be immediately obtained.
	\subsection{Complexity analysis}
	
	\begin{lem}\label{lem4.1}
		Suppose that Assumption \ref{ass} holds. Let $\{x_t\}$ be a sequence generated by the ILMNegCur algorithm. Let
		\begin{equation}\label{lem4.1:1}
				\epsilon_1  =\min\bigg\{\frac{1}{32}, \frac{1}{144}, \frac{\sqrt{L_2}}{32L_1}\bigg\}\epsilon^{3/2}, ~\epsilon_2 = \min\bigg\{\frac{1}{12},\frac{1}{36}\bigg\}\sqrt{L_2}\epsilon^{1/2},
			\end{equation} 
		 and select $\eta_y, \theta, N_t$ according to \eqref{gradientascent}. In the $t$th iteration, if $\|g_t\|\geqslant \epsilon$ and $\hat{\lambda}_t \geqslant -\frac{1}{4}\sqrt{L_2\|g_t\|}$, then
		\begin{align*}
				&\|g_{t+1}\| \leqslant \frac{55}{8}\|g_t\|,  \\
				&P(x_{t+1})  \leqslant P(x_t)- \dfrac{1}{24}\sqrt{L_2\|g_t\|}\bigg(\frac{-3\sqrt{L_2\|g_t\|} + \sqrt{9L_2\|g_t\| + 8L_2\|g_{t+1}\|}}{2L_2}\bigg)^2;
		\end{align*}
	    If $\|g_{t+1}\| \geqslant \frac{1}{2}\|g_t\|$ holds, then we have
		\begin{align}\label{lem4.1:4}
				P(x_{t+1}) \leqslant P(x_t)-\frac{1}{288\sqrt{L_2}}\|g_t\|^{3/2}.
		\end{align}
	\end{lem}
	\begin{proof}
	By a proof similar to \eqref{lem5:4}, we have
		\begin{align}\label{lem4.1:5}
				&~P(x_{t+1}) - P(x_t)\nonumber\\
				\leqslant&~ g_t^\top s_t + \frac{1}{2} s_t^\top H_t s_t + \epsilon_1\|s_t\| 
				+ \frac{\epsilon_2}{2} \|s_t\|^2 + \frac{L_2}{6} \|s_t\|^3 \nonumber\\
				=&~ s_t^\top(g_t +  H_t s_t + \sqrt{L_2\|g_t\|}s_t) -\frac{1}{2} s_t^\top (H_t + \frac{1}{2}\sqrt{L_2\|g_t\|}I) s_t-\frac{3}{4}\sqrt{L_2\|g_t\|}\|s_t\|^2 \nonumber\\
				&~ + \epsilon_1\|s_t\| 
				+ \frac{\epsilon_2}{2} \|s_t\|^2 + \frac{L_2}{6} \|s_t\|^3.
		\end{align}
		Combining \eqref{alg4:2} with $\hat{\lambda}_t \geqslant -\dfrac{1}{4}\sqrt{L_2\|g_t\|}$, we can easily see that
		\begin{align}\label{lem4.1:6}
			\lambda_{\min}(H_t) \geqslant \hat{\lambda}_t  -\dfrac{1}{4}\sqrt{L_2\|g_t\|} \geqslant -\dfrac{1}{2}\sqrt{L_2\|g_t\|}.
		\end{align}
	    Then, by combining \eqref{lem4.1:5} with \eqref{alg4:4} and \eqref{lem4.1:6}, we obtain that
		\begin{align}\label{lem4.1:7}
				P(x_{t+1}) - P(x_t) 
				\leqslant  - \frac{5}{8}  \sqrt{L_2\|g_t\|}\|s_t\|^2  + \epsilon_1\|s_t\| + \frac{\epsilon_2}{2} \|s_t\|^2 + \frac{L_2}{6} \|s_t\|^3.
			\end{align} 
		To further bound the right-hand side of \eqref{lem4.1:7}, we first give some bounds on $\|s_t\|$.
		On the one hand, by combining \eqref{alg4:4} with \eqref{lem4.1:6}, we then obtain that
		\begin{align}\label{lem4.1:8}
		       \|s_t\| = &~ \|(H_t+\sqrt{L_2\|g_t\|}I)^{-1}(-g_t+g_t+(H_t+\sqrt{L_2\|g_t\|}I)s_t)\| \nonumber \\
				 \leqslant&~
				 \|(H_t+\sqrt{L_2\|g_t\|}I)^{-1}\|\big(\|g_t\|+\frac{1}{4}\|g_t\|\big)\leqslant \frac{5}{2}\sqrt{\dfrac{\|g_t\|}{L_2}}.
			\end{align}
	   On the other hand, by further combining \eqref{alg4:4} with  $\|H_t\| \leqslant \|H_t-\nabla^2 P(x_t)\|+ \|\nabla^2 P(x_t)\| \leqslant L_1+\epsilon_2$,  we obtain that
	   \begin{align*}
		   	    \|s_t\| \geqslant \frac{\|g_t\|-\|g_t+(H_t+ \sqrt{L_2\|g_t\|}I)s_t\|}{L_1 + \epsilon_2 + \sqrt{L_2\|g_t\|}} \geqslant   \frac{\frac{3}{4}\|g_t\|}{L_1 + \epsilon_2 + \sqrt{L_2\|g_t\|}},  
		   \end{align*}
		which implies that 
		\begin{align}\label{lem4.1:9}
				\frac{4}{3}\epsilon_2\|s_t\| + \frac{4}{3}(L_1 + \sqrt{L_2})\sqrt{\|g_t\|}\|s_t\|\geqslant \epsilon^{1/2}\|g_t\| \geqslant \epsilon^{3/2}.
			\end{align} 
		The rest of the proof is basically the same as Lemma \ref{lem3.1}. We omit it here for simplicity.
	\end{proof}
	
	\begin{lem}\label{lem4.2}
		Suppose that Assumption \ref{ass} holds. Let $\{x_t\}$ be a sequence generated by the ILMNegCur algorithm with parameters chosen the same as in Lemma \ref{lem4.1}. At the $t$th iteration, if  $\hat{\lambda}_t \leqslant -\frac{1}{4}\sqrt{L_2\max\{\|g_t\|,\epsilon\}}$, then
		\begin{align}
				&P(x_{t+1}) \leqslant P(x_t)-\frac{1}{288\sqrt{L_2}}\max\{\epsilon^{3/2},\|g_t\|^{3/2}\}, \label{lem4.2:2}\\ 
				&\|g_{t+1}\| \leqslant 1 + \frac{L_1}{2\sqrt{L_2}}\sqrt{\max\{\|g_t\|,\epsilon\}} + \max\{\|g_t\|,\epsilon\}. \label{lem4.2:3}
		\end{align}
	 If $\|g_t\|\leqslant \epsilon$ and $\hat{\lambda}_t \geqslant -\dfrac{1}{4}\sqrt{L_2\epsilon}$,  we have found an $\mathcal{O}(\epsilon,\sqrt{\epsilon})$-second-order stationary point $x_t$.
	\end{lem}
The proof of Lemma \ref{lem4.2} is basically the same as the proofs of Lemma \ref{lem3.2} and Lemma \ref{lem3.4}, and will not be repeated here.

	We are now ready to establish the iteration complexity for the ILMNegCur algorithm.
	In particular, let $\epsilon > 0$ be any given target accuracy. We  denote the first iteration index of the inexact LMNegCur algorithm to achieve $\|g_t\| \leqslant \epsilon $ and $\hat{\lambda}_t \geqslant -\frac{1}{4}\sqrt{L_2\epsilon}$ by 
	\begin{align*}
		\tilde{T}(\epsilon) := \min\left\{t |\|g_t\| \leqslant \epsilon ~\text{and}~ \hat{\lambda}_t \geqslant -\frac{1}{4}\sqrt{L_2\epsilon}\right\}.
	\end{align*}
	Then, by Lemma \ref{lem4.2}, we know that  $x_{\tilde{T}(\epsilon)}$ is already an $\mathcal{O}(\epsilon,\sqrt{\epsilon})$-second-order stationary point.
	To facilitate complexity analysis, we define the following indicator set:
	\begin{align}
		& \mathcal{F}_{\tilde{T}(\epsilon)} = \left\{t < \tilde{T}(\epsilon): P(x_{t+1})-P(x_t)\leqslant -\frac{1}{288\sqrt{L_2}} \max\{\|g_t\|^{3/2}, \epsilon^{3/2}\}\right\}, \\
		& \mathcal{G}_{\tilde{T}(\epsilon)} = \left\{t < \tilde{T}(\epsilon): \|g_{t+1}\|\leqslant \frac{1}{2}\|g_t\|, t \notin \mathcal{F}_{\tilde{T}(\epsilon)} \right\}.
	\end{align}
	By Lemma \ref{lem4.1} and \ref{lem4.2}, we can conclude that each iteration $t$ belongs to at least one of the above two sets $\mathcal{F}_{\tilde{T}(\epsilon)}$ and $\mathcal{G}_{\tilde{T}(\epsilon)}$.
	
	Similar to the proof in Section 2.3, we give an upper bound on $\tilde{T}(\epsilon)$ in the following theorem.
	
	\begin{thm}\label{thm4.1}
		Suppose that Assumption \ref{ass} holds. Let $\{x_t\}$ be a sequence generated by the ILMNegCur algorithm with all parameters chosen the same as in Lemma \ref{lem4.1}. We have
		\begin{align*}
				\tilde{T}(\epsilon)  &\leqslant  \tilde{C}_{1}\epsilon^{-3/2}\ln\epsilon^{-1} + \tilde{C}_{2}\epsilon^{-3/2}+1 =\tilde{\mathcal{O}}(\ell^{1.5}\rho^{0.5}\mu^{-1.5}\epsilon^{-1.5}), \\
				\sum_{t=0}^{\tilde{T}(\epsilon)}N_t &\leqslant 2\sqrt{\kappa} \ln \frac{\sqrt{\kappa+1}\|y_{-1}-y^*(x_0)\|}{A} +2\sqrt{\kappa} \tilde{T}(\epsilon) \ln\frac{\sqrt{\kappa+1}(A + \kappa r\sqrt{\tilde{G}}) }{A},			
			\end{align*}
		where 
		$\tilde{C}_{1}=288\sqrt{L_2}(P(x_0)-P^*)/\ln2$, $\tilde{C}_{2}=(\ln\tilde{G}+\ln2)\tilde{C}_{1}$,
		$\tilde{G}:= \max\big\{\frac{55}{8}\tilde{C}, 1 + \frac{L_1}{2\sqrt{L_2}}\hat{C}^{1/2} + \tilde{C}, 2 +\frac{L_1}{2\sqrt{L_2}}\big\}$ with $\tilde{C}:=\left (288\sqrt{L_2}\left (P(x_0)-P^*\right )\right)^{2/3}$, and $A$ is defined as in Lemma \ref{gradientascent}.
	\end{thm}

  Next, we give the Hessian-vector products required by the ILMNegCur algorithm. Note that, according to Lemmas 9 and 11 in \cite{Royer2018ComplexityAO}, in each outer iteration, both the randomized Lanczos procedure and the conjugate gradient (CG) procedure require at most $\min\big\{n,\tilde{\mathcal{O}}(L_1^{0.5}L_2^{-0.25}\epsilon^{-0.25})\big\}$ Hessian-vector products to obtain a direction $s_t$ that satisfies conditions \eqref{alg4:2} and \eqref{alg4:4} with high probability. We immediately obtain the following corollary.
       
   \begin{cor}\label{cor2}
    	Suppose that Assumption \ref{ass} holds, and the matrix inverse computation  $(\nabla_{y y}^{2}f(x_t,y_t))^{-1}$ in $H_t$ is approximated via $\tilde{\mathcal{O}}(\sqrt{\kappa})$ terms of Chebyshev polynomials as in \cite{Luo2022FindingSS}. Let $\{x_t\}$ be a sequence generated by the ILMNegCur algorithm  and all other parameters chosen the same as in Lemma \ref{lem4.1}. Then, $x_{\tilde{T}(\epsilon)}$ is an $\mathcal{O}(\epsilon,\sqrt{\epsilon})$-second-order stationary point with high probability guarantee. 
    	the total number of Hessian-vector products  required is at most $\tilde{\mathcal{O}}(\ell^{2.25}\rho^{0.25}\mu^{-1.75}\epsilon^{-1.75})$.
   \end{cor}

\section{Numerical Results}
In this section, we compare the numerical performance of the proposed GRTR (IGRTR) and LMNegCur (ILMNegCur) algorithms with the gradient descent (GDA) algorithm,  the MCN (IMCN) algorithm \cite{Luo2022FindingSS}, the MINIMAX-TR algorithm, and  the MINIMAX-TRACE algorithm \cite{Yao2024TwoTR} in solving a synthetic minimax optimization problem (a highly artificial piecewise function with smooth components connecting the segments) and a nonconvex-strongly concave minimax problem with sinusoidal perturbation and an adversarial deep learning problem. The first two experiments are implemented using Python 3.6 on a laptop with an Intel Core i7 2.8GHz and 8GB of RAM, while the last experiment is conducted on a 2.2GHz CPU, 80GB of RAM, and an Nvidia RTX 4090D GPU.

\subsection{Synthetic minimax optimization problem}
We first consider the following nonconvex-strongly concave minimax problem \cite{Yao2024TwoTR}:
\begin{equation}\label{numerical problem1}
	\min \limits_{x\in \mathbb{R}^n}\max \limits_{y\in \mathbb{R}^m}g(x) - \frac{1}{2} y^2,
\end{equation}
where
\begin{align*}
	g(x)= \begin{cases}
		g_{i,1}(x) & x_1, \cdots, x_{i-1} \in [2\tau, 6\tau], x_i \in [0,\tau], x_{i+1},\cdots, x_n\in [0,\tau], \\
		&1\leqslant i\leqslant n-1;\\
		g_{i,2}(x)   &x_1, \cdots, x_{i-1} \in [2\tau, 6\tau], x_i \in [\tau, 2\tau], x_{i+1},\cdots, x_n \in [0,\tau], \\
		&1\leqslant i\leqslant n-1;\\
		g_{n,1}(x)  &x_1, \cdots, x_{n-1} \in [2\tau, 6\tau], x_n \in [0,\tau]; \\
		g_{n,2}(x)  &x_1, \cdots, x_{n-1} \in [2\tau, 6\tau], x_n \in [\tau, 2\tau]; \\
		g_{n+1, 1}(x)  &x_1, \cdots, x_{n} \in [2\tau, 6\tau],
	\end{cases}
\end{align*}
with
\begin{align*}
	&g_{i,1}(x) = \sum_{j=1}^{i-1}L(x_j - 4\tau)^2 -\gamma x_i^2 + \sum_{j=i+1}^{n}Lx_j^2 - (i-1)\nu, 1\leqslant i\leqslant n-1, \\
	&g_{i,2}(x) = \sum_{j=1}^{i-1}L(x_j - 4\tau)^2 + h(x) + \sum_{j=i+2}^{n}Lx_j^2 - (i-1)\nu, 1\leqslant i \leqslant n-1, \\
	&g_{n,1}(x) = \sum_{j=1}^{n-1}L(x_j - 4\tau)^2 -\gamma x_n^2 - (n-1)\nu, \\
	&g_{n,2}(x) = \sum_{j=1}^{n-1}L(x_j - 4\tau)^2 + h(x) - (n-1)\nu, \\
	&g_{n+1,1}(x) = \sum_{j=1}^{n}L(x_j - 4\tau)^2 - n\nu,
\end{align*}
and
\begin{align*}
	h(x)=\begin{cases}
		h_1(x_i) + h_2(x_i)x_{i+1}^2 &x_1, \cdots, x_{i-1}\in [2\tau, 6\tau],\ x_i\in [\tau, 2\tau], \\&x_{i+1},\cdots, x_n \in [0,\tau], 1\leqslant i\leqslant n-1;  \\
		h_1(x_n) &x_1, \cdots, x_{n-1}\in [2\tau, 6\tau],\ x_n \in [\tau, 2\tau];\\
		0 &\text{otherwise},
	\end{cases}
\end{align*}
where	
\begin{align*}
	h_1(x) &= -\gamma x^2 + \frac{(-14L + 10\gamma)(x-\tau)^3}{3\tau} + \frac{(5L - 3\gamma)(x-\tau)^4}{2\tau^2},\\
	h_2(x) &= -\gamma - \frac{10(L+\gamma)(x-2\tau)^3}{\tau^3} - \frac{15(L+\gamma)(x-2\tau)^4}{\tau^4} - \frac{6(L+\gamma)(x-2\tau)^5}{\tau^5},
\end{align*}
and
\[
L > 0,\ \gamma >0,\ \tau = e,\ \nu = -h_1(2\tau) + 4L\tau^2.
\]
Note that $g(x)$ is only defined on the following domain:
\begin{equation*}
	D_0 = \bigcup_{i=1}^{n+1}\left\{x\in \mathbb{R}^n: 6\tau \geqslant x_1,\cdots,x_{i-1}\geqslant 2\tau, 2\tau\geqslant x_i\geqslant 0, \tau\geqslant x_{i+1},\cdots,x_n\geqslant 0\right\}.
\end{equation*}
By Lemma A.3 in \cite{Du2017GradientDC}, we know that there is only one local optimum, i.e., $(4\tau,...,4\tau)^{\top}$ and $d$ stationary points, i.e.,
\[
(0,\cdots,0)^{\top}, (4\tau, 0,\cdots, 0)^{\top}, \cdots, (4\tau,\cdots, 4\tau,0)^{\top}.
\]

In the experiment, we set the number of inner iterations $N_t$ to $1000$, the dimension of the variable $y$ to $5$, and the dimension of the variable $x$ to $\{10,20\}$. The initial point $x_0$ is set to $(10^{-3},\cdots,10^{-3})^{\top}$, close to the stationary point $(0,\cdots,0)^{\top}$, and $y_0$ is a random vector. For the problem \eqref{numerical problem1}, the parameter $L$ is $\{1, 1.5, 2\}$, and the parameter $\gamma$ is $1$. Figure \ref{fig1} shows the numerical performance of the nine tested algorithms for solving the problem \eqref{numerical problem1} under different parameter selections, where the horizontal axis represents the running time and the vertical axis represents the difference between $P(x)$ and the optimal value $P^*$.

Since there are $n$ saddle points in problem \eqref{numerical problem1}, and it is difficult for the GDA algorithm to escape from the saddle point when it is trapped in the saddle point, this also causes the curve of the GDA algorithm to present multiple horizontal line segments. As can be seen from Fig. \ref{fig1}, all second-order algorithms can effectively escape from the saddle point. The MINIMAX-TR algorithm converges slowly because the trust-region radius is set to a small value. The proposed GRTR algorithm and LMNegCur algorithm are both better than the state-of-the-art trust-region algorithm MINIMAX-TRACE and the cubic regularization algorithm MCN. The ILMNegCur algorithm performs better than other second-order algorithms because the approximate minimum eigenvalue of the Hessian matrix is achieved very early in the iterations of the Lanczos process, while the IGRTR and IMCN algorithms require more iterations and running time to escape from the saddle point. Numerical results show the effectiveness of the proposed GRTR (IGRTR) algorithm and LMNegCur (ILMNegCur) algorithm.

\begin{figure}[t]
	\centering 
	\includegraphics[scale=0.25]{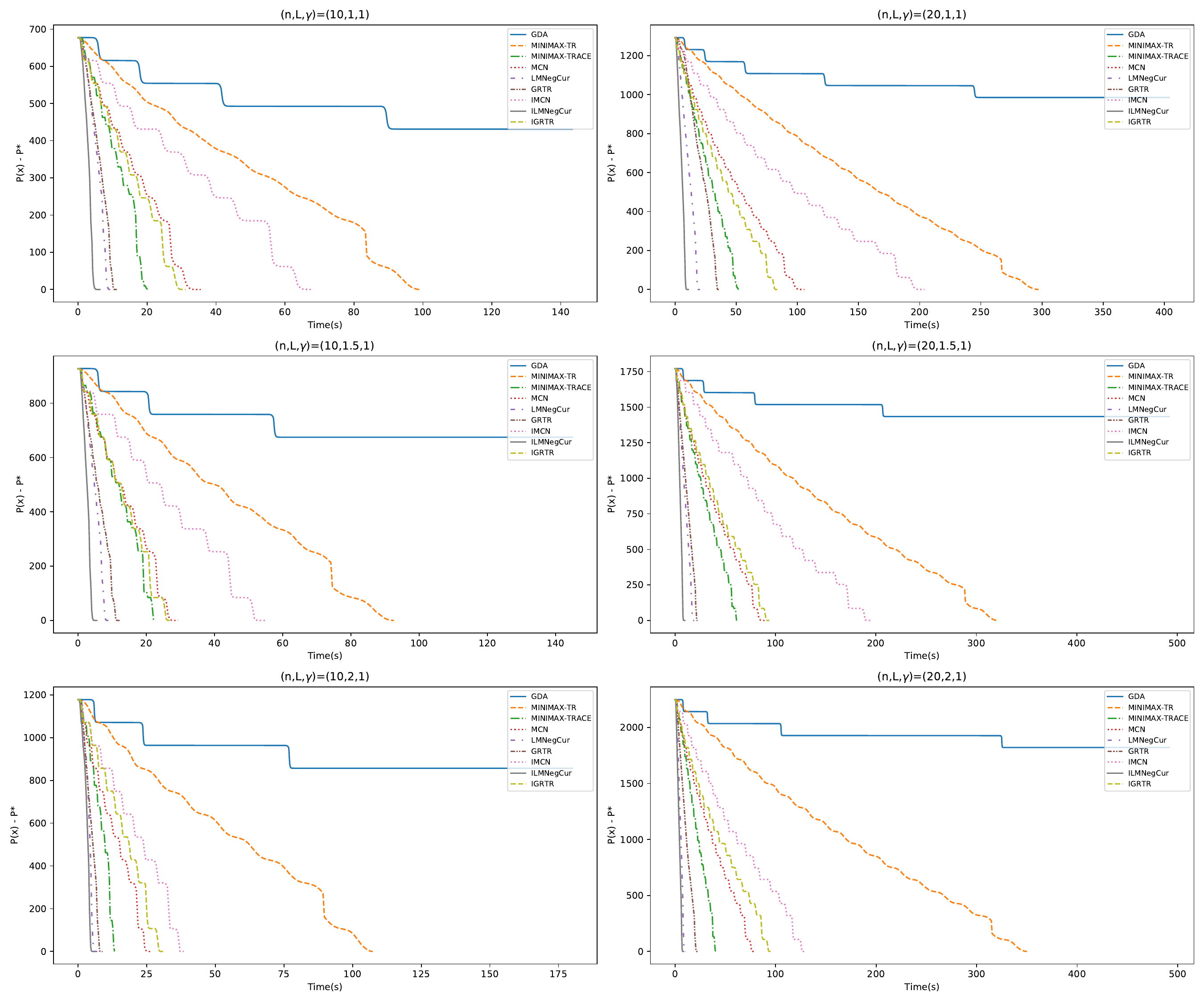}	
	\caption{Numerical results of the tested algorithms for solving \eqref{numerical problem1} with different choices of $n$, $L$ and $\gamma$.}
	\label{fig1}
\end{figure}

\subsection{Nonconvex-strongly concave problem with sinusoidal perturbation}
  We consider the following nonconvex-strongly concave problem with sinusoidal perturbation \cite{Zhang2024AGDA}:
\begin{equation}\label{numerical problem3}
	f(x,y)=\sin(\sqrt{L - 1}\cdot\sqrt{\|x\|^2 + 1})+\frac{1}{2}x^{\top}Qx+x^{\top}Ay-\frac{\mu_y}{2}\|y\|^2
\end{equation}
where \(A\in\mathbb{R}^{n\times m}\), \(Q\in\mathbb{R}^{n\times n}\), and \(\mu_y > 0\). 

In the experiment, we set \(m = n \in \{100,1000\}\), \(\mu_y = 1\) and \(L\in\{5, 20\}\), $A = V\Lambda_A V^{-1}$ and $Q = V\Lambda_Q V^{-1}$, where $V$ is an orthogonal matrix, $\Lambda_Q = \frac{\Lambda_Q^0}{\|\Lambda_Q^0\|_2}$, where $\Lambda_Q^0$ is a random diagonal matrix whose diagonal elements are uniformly randomly sampled from the interval $[-1,1]$, and \(\Lambda_A\) satisfies \(\Lambda_Q+\frac{1}{\mu_y^2}\Lambda_A^2\succeq0\). It is not difficult to verify that \(f(\cdot,\cdot)\) is a \(L\)-gradient Lipschitz continuous function. Figure \ref{fig3} shows the numerical performance of nine test algorithms for solving the problem \eqref{numerical problem3} under different parameter selections, where the horizontal axis represents the running time and the vertical axis represents the gradient norm of $P(x):=\max_{y} f(x,y)$. We run each algorithm until $\|\nabla P(x_t)\| \leqslant 1e-5$.

The results show that the GDA algorithm still converges relatively slowly.
Although the second-order methods have a higher cost per iteration, they still outperform the GDA algorithm in terms of running time, especially in converging to a high-quality solution. 

\begin{figure}[t]
	\centering 
	\includegraphics[scale=0.45]{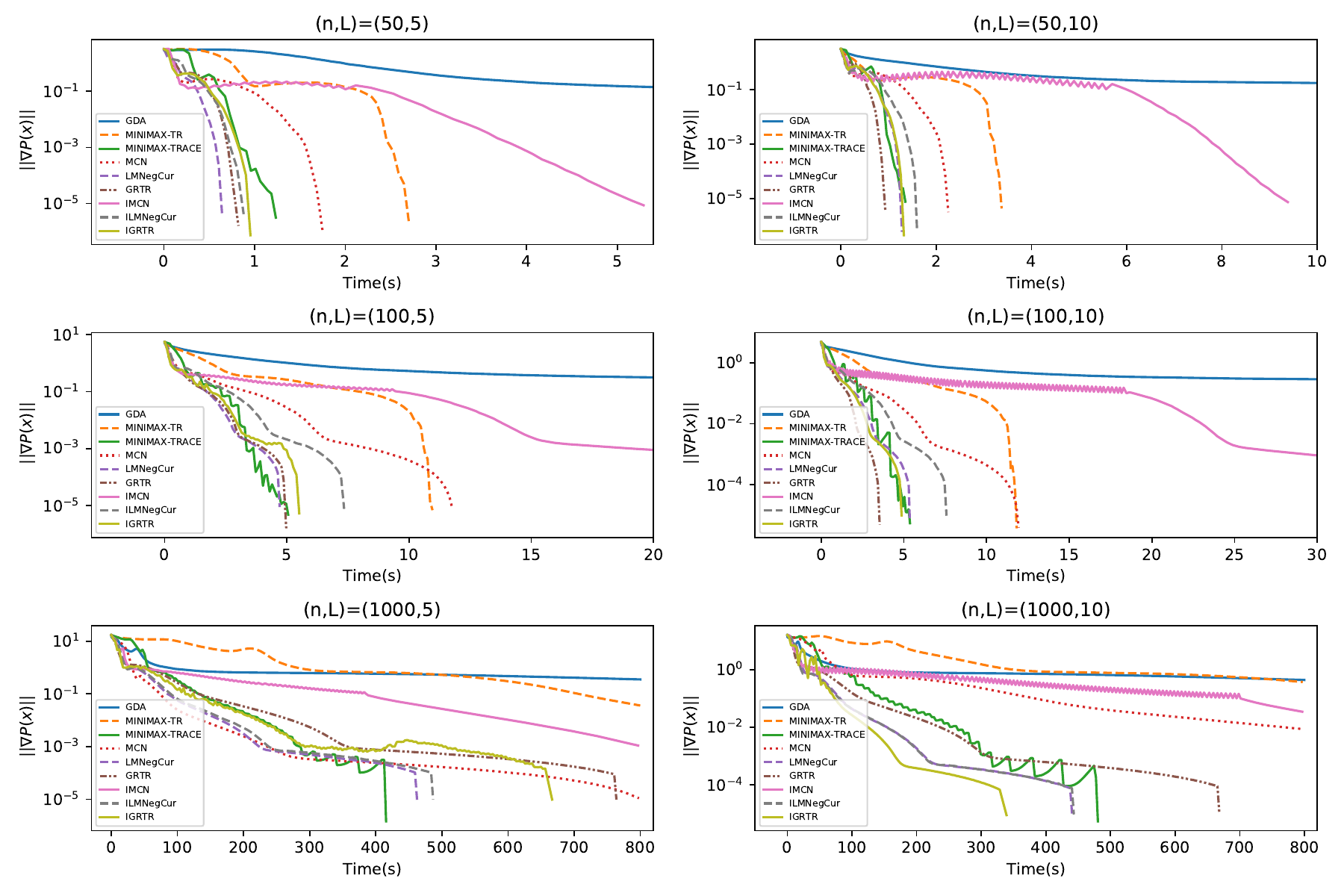}	
	\caption{Numerical results of the nine tested algorithms for solving \eqref{numerical problem3} with different choices of $n$ and $L$.}
	\label{fig3}
\end{figure}


\subsection{An adversarial deep learning problem}

The adversarial deep learning problem aims to train a convolutional neural network model with strong adversarially-robust capabilities, which can be formulated as the following minimax optimization problem \cite{Chen2023ACR}:
\begin{align}\label{numerical problem2}
	\min_{x} \max_{y=\{y_i\}_{i = 1}^{n}} \frac{1}{n} \sum_{i = 1}^{n} \left[ \ell(h_{x}(y_i), b_i)- \lambda \|y_i - a_i\|^2 \right], 
\end{align}
where $x$ is the parameter of the neural network $h_{x}(\cdot)$, $(a_i, b_i)$ corresponds to the \(i\)th image and label respectively, and $y_i$ represents the adversarial sample corresponding to $a_i$.

In the experimental setting, we choose the cross entropy loss function as \(\ell\) and the penalty coefficient as \(\lambda = 2\). We use the MNIST dataset \cite{Yann1998GradL}, which contains 50k training samples and 10k test samples. The network model we use is a convolutional neural network consisting of a convolutional block and a fully connected layer. Specifically, the convolutional block contains a convolutional layer and a sigmoid activation layer. The convolutional layer has $1$ input channel and $1$ output channel, a kernel size of $3$, a stride of $4$, and a padding of $1$. The input dimension of the connection layer is $49$ and the output dimension is $10$.

Since exact computation of the second-order subproblems can be a computationally expensive step for large-scale problems, we only compare the proposed IGRTR and ILMNegCur algorithms with the GDA, IMCN, and IMINIMAX-TR algorithms, which are inexact versions of the algorithms.
For all algorithms, we set the batch size to $64$. We select the gradient Lipschitz constant $L_1$ from $\{1, 5,10\}$ and the Hessian Lipschitz constant $L_2$ from $\{10,50,100\}$ for experiments. The experimental results showing the most stable and optimal performance are then shown. For the IGRTR algorithm, we set $N_t=30$, $\eta_y=0.01$, $\sigma=\sqrt{10}$, $r=1/\sqrt{10}$, and $\epsilon=0.01$, and solve the trust region subproblem in the same way as the trust region Newton-conjugate gradient method (Algorithm 4.1 in \cite{Curtis2021TrustN}). For the ILMNegCur algorithm, we set $N_t=30$, $\eta_y=0.01$, $L_2=50$, $\epsilon=0.01$, and solve the minimum eigenvalue subproblem by the Lanczos method, and the linear system subproblem by the conjugate gradient method. For the the IMINIMAX-TR algorithm, we set $N_t=30$, $\eta_y=0.01$, $L_2=50$, $\epsilon=0.01$, and solve the trust-region subproblem the same as in the IGRTR algorithm.
For the GDA algorithm, we set the step size of both $x$ and $y$ to 0.01. In addition, for the IMCN algorithm, we solve the subproblem about ${y}$  the same as the IGRTR algorithm and the ILMNegCur algorithm.
In addition, the cubic subproblem is solved by Algorithm 4 in \cite{Luo2022FindingSS}, where $L=2$, $M=50$, $\sigma=0.001$, and $\mathcal{K}(\epsilon,\delta^{'})=10$.

Figure \ref{fig2} shows the numerical performance of the five tested algorithms when solving problem \eqref{numerical problem2}, where in sub-figure (a), the horizontal axis represents the running time and the vertical axis represents the test accuracy; while in sub-figure (b), the horizontal axis represents the number of iterations $t$ and the vertical axis represents the function value $P(x)$.
It can be seen that the number of iterations and convergence time of all second-order algorithms are significantly less than that of the GDA algorithm, while IGRTR performs slightly better than other second-order algorithms. These results show the advantages of the two proposed algorithm.

\begin{figure}[htbp]
	\centering
	\subfigure[]{\includegraphics[height=4cm,width=6cm, trim=80pt 20pt 5pt 50pt]{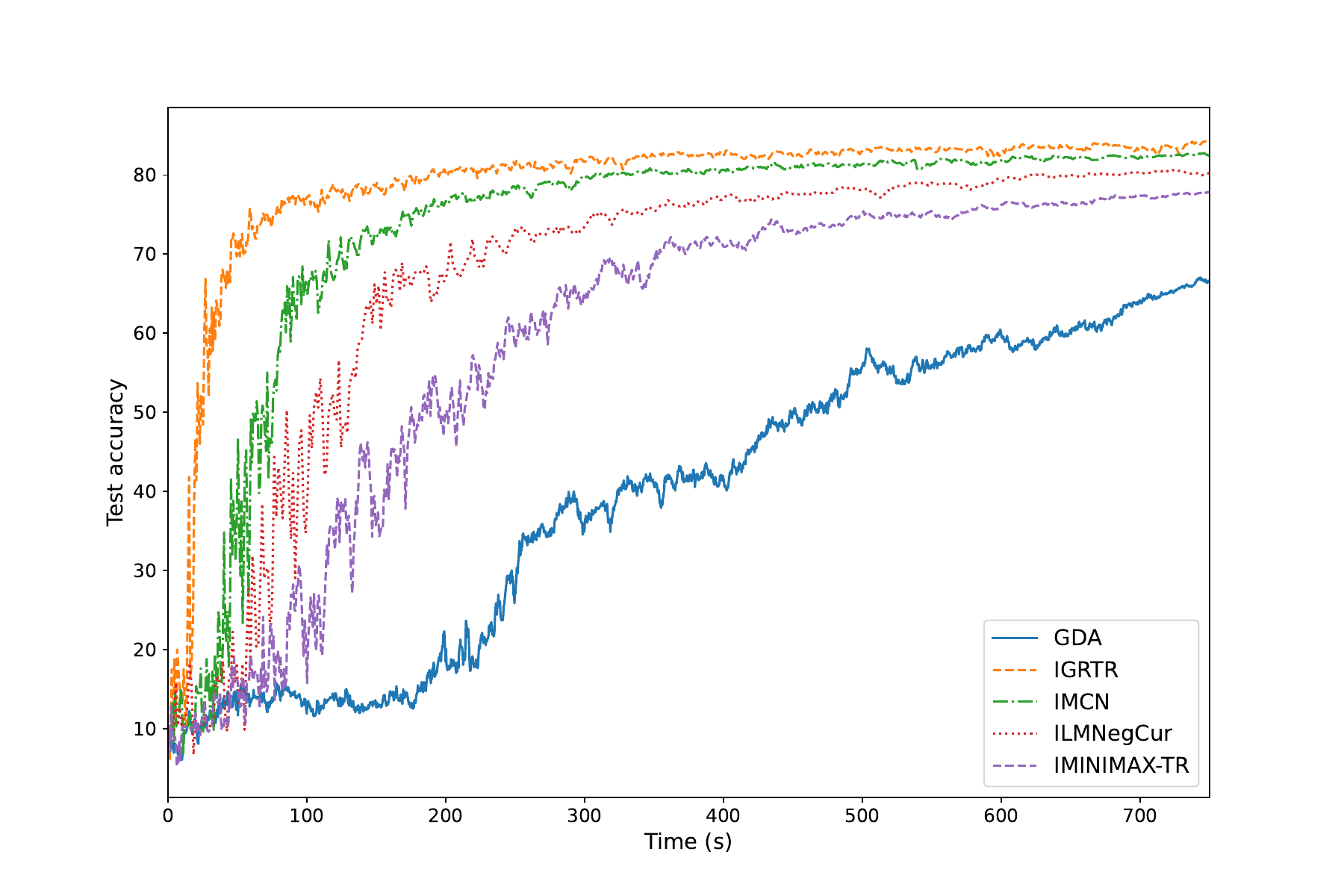}}
    \hfill 
	\subfigure[]{\includegraphics[height=4cm,width=6cm, trim=80pt 20pt 5pt 50pt]{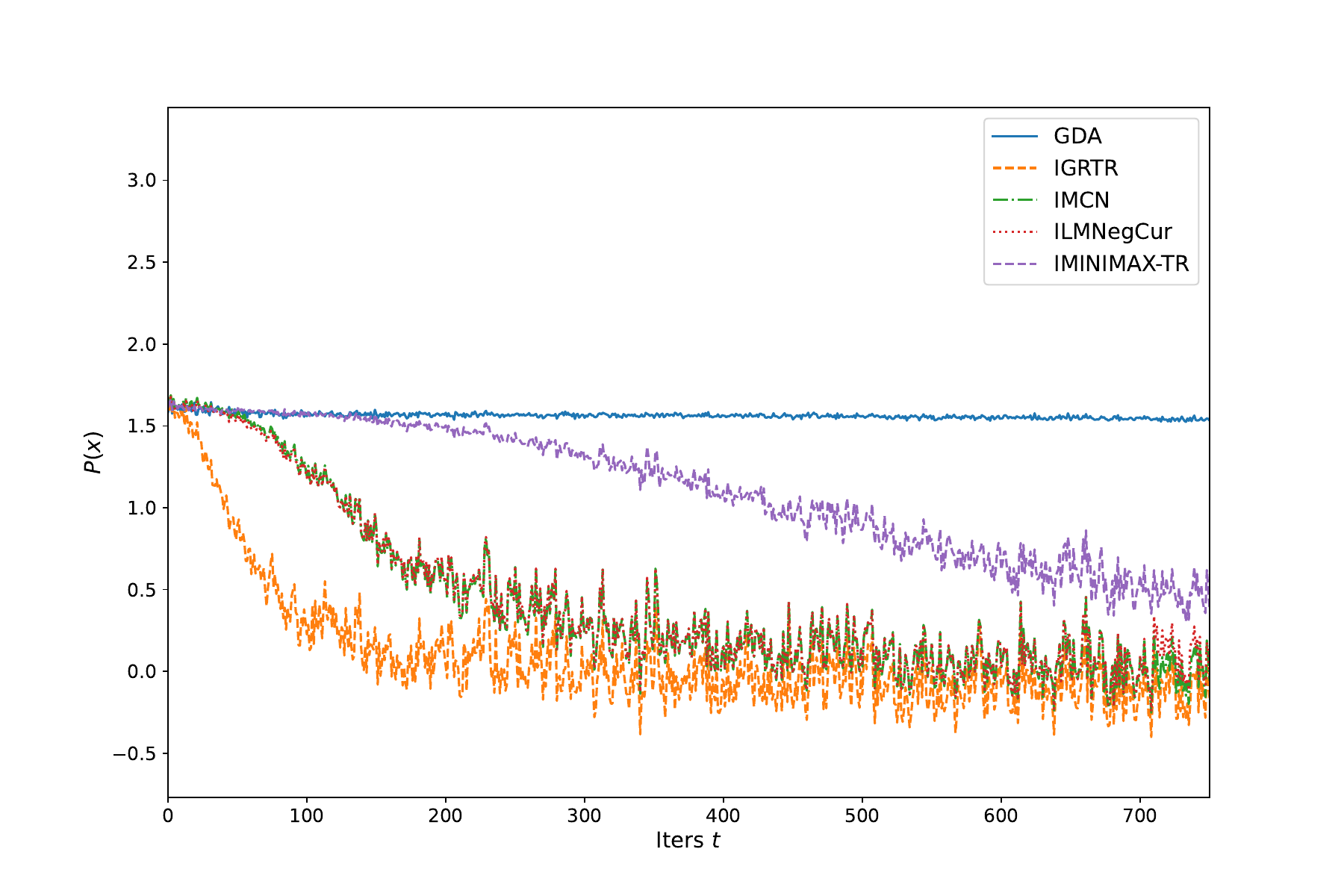}}
	\caption{ Numerical results of the five tested algorithms for solving \eqref{numerical problem2}.}
	\label{fig2}
\end{figure}

\section{Conclusions}
In this paper, we propose a gradient norm regularized trust-region (GRTR) algorithm and a Levenberg-Marquardt algorithm with a gradient norm regularization coefficient and use the negative curvature direction to correct the iteration direction (LMNegCur). Both the GRTR algorithm and the LMNegCur algorithm can achieve an $\mathcal{O}(\epsilon,\sqrt{\epsilon})$-second-order stationary point with an iteration complexity of $\tilde{\mathcal{O}}(\ell^{1.5}\rho^{0.5}\mu^{-1.5}\epsilon^{-1.5})$. The inexact variants of both methods require calling $\tilde{\mathcal{O}}(\ell^{2.25}\rho^{0.25}\mu^{-1.75}\epsilon^{-1.75})$ times of Hessian-vector products and $\tilde{\mathcal{O}}(\ell^{2}\rho^{0.5}\mu^{-2}\epsilon^{-1.5})$ times of gradient ascent steps to find $\mathcal{O}(\epsilon,\sqrt{\epsilon})$-second-order stationary points with high probability. It is worthwhile to further study the application of random subsampling to further improve the performance and complexity of the proposed algorithm for large-scale nonconvex minimax optimization problems.

\end{document}